\newcommand{\Z}{\mathbb{Z}}
\newcommand{\Q}{\mathbb{Q}}
\newcommand{\R}{\mathbb{R}}
\newcommand{\C}{\mathbb{C}}
\newcommand{\N}{\mathbb{N}}
\newcommand{\FF}{\mathbb{F}}
\newcommand{\OO}{\mathcal{O}}
\newcommand{\NL}{\operatorname{N}_{L,k}}
\DeclareMathOperator{\Trace}{Tr}
\newtheorem{definition}{Definition}
\newtheorem*{theorem*}{Theorem}
\newtheorem{theorem}{Theorem}
\newtheorem{lemma}{Lemma}
\newtheorem{corollary}{Corollary}
\newtheorem{proposition}{Proposition}
\newtheorem{problem}{Problem}
\newcommand{\vc}{\mathbf{c}}
\newcommand{\vw}{\mathbf{w}}
\newcommand{\vu}{\mathbf{u}}
\newcommand{\vv}{\mathbf{v}}
\newcommand{\vxi}{\boldsymbol{\xi}}
\newcommand\NLk{\operatorname{N}_{L,k}}
\newcommand\eNLk{\tilde{\operatorname{N}}_{L,k}}
\newcommand\eNLz{\tilde{\operatorname{N}}_{L,2}}
\newcommand\card{\#}
\newcommand\where{\ :\ }
\newcommand\cF{\mathcal{F}}
\newcommand\cS{\mathcal{S}}
\newcommand\cU{\mathcal{U}}
\newcommand\cT{\mathcal{T}}
\newcommand\cE{\mathcal{E}}
\newcommand\vutil{\widetilde{\mathbf{u}}}
\newcommand\util{\widetilde{u}}
\newcommand\q{q}
\newcommand\qu{q}
\DeclareMathOperator{\WR}{R}
\DeclareMathOperator{\Spec}{Spec}
\newcommand{\A}{\mathbb{A}}
\newcommand{\cX}{\mathcal{X}}
\theoremstyle{remark}
\newtheorem{remark}{Remark}
\definecolor{dblackcolor}{rgb}{0.0,0.0,0.0}
\definecolor{dbluecolor}{rgb}{0.01,0.02,0.7}
\definecolor{dgreencolor}{rgb}{0.2,0.4,0.0}
\definecolor{dgraycolor}{rgb}{0.30,0.3,0.30}
\begin{document}
\title[Sums of $k$ units]{Rational integers as sums of units -- the quadratic case}
\subjclass[2010]{11D45,11R11,11N45} \keywords{unit equation, sums of units, unit sum number problem, asymptotics}

\author[C. Frei]{Christopher Frei}
\email{frei@math.tugraz.at}

\author[M. Widmer]{Martin Widmer}
\email{martin.widmer@tugraz.at}

\address{Institute of Analysis and Number Theory, Graz University of Technology,
  Steyrergasse 30/II, 8010 Graz, Austria}

\author[V. Ziegler]{Volker Ziegler}
\email{volker.ziegler@sbg.ac.at}
\address{University of Salzburg,
Hellbrunnerstrasse 34/I,
A-5020 Salzburg, Austria}

\begin{abstract}
How many natural numbers below $X$ can be written as a sum of $k$ units 
of the ring of integers of a given number field? We give the asymptotics
as $X$ gets large for quadratic number fields. This solves a problem of Jarden and Narkiewicz from 2007 for quadratic number fields.  
\end{abstract}

\maketitle

\section{Introduction}

%The interplay between multiplicative and additive structures is a rich source of interesting and intriguing  phenomena in mathematics that has driven much of classical as well as recent research in number theory. In this article we solve a problem of Jarden and Narkiewicz of this kind.

Jarden and Narkiewicz proved that if $L$ is a number field then there is no natural number $k$ such that  every element of the ring of integers $\OO_L$ is a sum of at most $k$ units of $\OO_L$. 
%(and for arbitrary integral domains of characteristic zero)
More precisely they proved \cite[Corollary 6]{Jarden:2007} that the rational integers $n$ that are sums of at most $k$ units have density zero. 
Their proof is short and elegant, based on van der Warden's theorem and a classcial finiteness result concerning unit equations, but does not shed any light on the asymptotics of the counting function. 
They proposed the following problem \cite[Problem C]{Jarden:2007}.

\begin{problem}[Jarden and Narkiewicz, 2007]\label{prob:JN}
Let $L$ be a number field. Obtain an asymptotical formula for the number of positive rational integers $n\leq X$ which are sums of at most $k$ units of $\OO_L$. 
\end{problem}

So far this problem has not been addressed in the literature. In this article
we solve Problem \ref{prob:JN} for quadratic number fields.

For imaginary quadratic fields all units are roots of unity. Hence, no natural number $n>k$  is a sum
of at most $k$ units, whereas clearly all other $n$
are. 
So let us fix a \emph{real} quadratic number field $L=\Q(\sqrt{d})$ with $d\geq 2$ and
squarefree. For $\vw=(w_1,\ldots,w_r)\in L^r$, we write
\begin{equation}
  \label{eq:def_Sw}
  S_\vw:=w_1+\cdots+w_r.
\end{equation}
Throughout this paper, we let $X\geq 2$ and $k\in\N=\{1,2,3,\ldots\}$. 
We are interested in the set\footnote{We interpret $(\OO_L^\times)^0$ as containing only the empty tuple $\vu$, and $S_\vu=0$.}
$$\NL:=\{n\in\Z\where n=S_\vu \text{ for some }\vu\in(\OO_L^\times)^r\text{ with }0\leq r\leq k \}$$
and its counting function
\begin{equation}
  %\label{eq:def_NLk}
  \NLk(X):=\card\{n\in\NL: |n|\leq X\}.
\end{equation}
Non-zero integers $n$ in $\NLk$ come in pairs $n,-n$. Hence,
$\NLk(X)-1$ is twice 
the number of positive rational integers $n\leq X$ which are sums of at most $k$ units of $\OO_L$.

Our main result is the following.

\begin{theorem}\label{thm:main}
  Let $\eta>1$ be the fundamental unit of the real quadratic field $L$, let
  $k\in\N$ and define $\rho:=\lfloor k/2\rfloor$. Then, for $X\geq 2$,
  \begin{equation*}
    \NLk(X)= c_k\left(\frac{2\log X}{\log\eta}\right)^\rho+O_{k,L}((\log X)^{\rho-1}),
  \end{equation*}
  where
  \begin{equation*}
    c_k=
    \begin{cases}
      1/\rho! &\text{ if $k$ is even,}\\
      3/\rho! &\text{ if $k$ is odd.}
    \end{cases}
  \end{equation*}
\end{theorem}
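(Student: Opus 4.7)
The plan is to reduce the enumeration of $\NL$ to an explicit combinatorial count over tuples of integer exponents, exploiting the structure $\OO_L^\times = \{\pm 1\} \times \langle \eta \rangle$. Any element of $\NL$ admits a representation $n = \sum_{i=1}^{r} \epsilon_i \eta^{m_i}$ with $r \leq k$, $\epsilon_i \in \{\pm 1\}$, $m_i \in \Z$, and rationality $n = \bar n$ gives the linear relation $\sum_i \epsilon_i (\eta^{m_i} - \bar\eta^{m_i}) = 0$. This is an $S$-unit equation in $2r$ units of $\OO_L$ with coefficients $\pm 1$, so by the quantitative $S$-unit equation theorem (Evertse--Schlickewei--Schmidt) the tuple either belongs to one of $O_k(1)$ non-degenerate exceptional configurations or decomposes into vanishing subsums. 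Minimal vanishing subsums correspond either to \emph{trace pairs} (conjugate exponents $m_j = -m_i$ with matching signs, contributing $\pm \Trace(\eta^{m_i})$ to $n$) or to \emph{cancelling pairs} (equal exponents, opposite signs, contributing $0$), while rational unit components $\pm \eta^0 = \pm 1$ automatically satisfy the equation and contribute $\pm 1$.

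The main term arises from tuples consisting of exactly $\rho = \lfloor k/2 \rfloor$ trace pairs together with $\delta$ additional rational units, where $\delta \in D_k$ with $D_k = \{0\}$ if $k$ is even and $D_k = \{-1, 0, 1\}$ if $k$ is odd (since $2\rho + |\delta| \leq k$). Such a tuple yields
\[
 n = \sum_{j=1}^{\rho} \epsilon_j \Trace(\eta^{a_j}) + \delta
\]
with the $a_j \geq 0$ distinct and $\epsilon_j \in \{\pm 1\}$. Since $|\Trace(\eta^a)| = \eta^a + O(\eta^{-a})$, sorting $a_1 > \cdots > a_\rho$ gives $|n| = \eta^{a_1}(1 + O(\eta^{a_2 - a_1}))$, so the constraint $|n| \leq X$ is equivalent, up to a bounded perturbation, to $a_1 \leq M := \log X / \log \eta$. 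A direct count then produces
\[
 2^\rho \cdot |D_k| \cdot \binom{M+1}{\rho} = c_k\,(2\log X/\log\eta)^\rho + O((\log X)^{\rho-1})
\]
parameter tuples, matching the claimed constants $c_k = 1/\rho!$ for even $k$ and $c_k = 3/\rho!$ for odd $k$.

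To promote the tuple count to an integer count, I will prove injectivity (distinct tuples yield distinct $n$) up to an error set of size $O((\log X)^{\rho-1})$ by a dominance argument: $\sign(n) = \epsilon_1$, $a_1 = \lfloor \log|n|/\log\eta \rfloor + O(1)$, and the remaining parameters are recovered by subtracting $\epsilon_1 \Trace(\eta^{a_1})$ and iterating. Every other contribution to $\NLk(X)$ is also $O((\log X)^{\rho-1})$: tuples with fewer than $\rho$ trace pairs have at most $\rho - 1$ free exponents, the non-degenerate exceptions contribute $O_k(1)$, and boundary effects are absorbed similarly. The main obstacle I anticipate is the block classification, where one must extract from the $S$-unit equation theorem a clean normal form for rational sums of at most $k$ units and isolate the $O_k(1)$ non-trivial cases; a closely related technical difficulty is controlling collisions in the injectivity argument when consecutive exponents $a_j - a_{j+1}$ are bounded, where the dominance estimate degrades and genuine coincidences between parameter tuples must be bounded uniformly in $X$.
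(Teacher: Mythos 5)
Your outline follows essentially the same strategy as the paper (reduce rationality to a vanishing sum of units and its minimal vanishing subsums, count exponent tuples of trace pairs, control collisions), but two steps you flag as ``anticipated obstacles'' are genuine gaps, and for one of them the method you propose cannot work as stated. First, the dichotomy ``minimal vanishing subsums are trace pairs or cancelling pairs'' is false: minimal vanishing subsums of $\sum_i \epsilon_i(\eta^{m_i}-\bar\eta^{m_i})=0$ of length $\geq 3$ do occur. For example, in $\Q(\sqrt{5})$ the units $u_1=\tfrac{3+\sqrt{5}}{2}$, $u_2=\tfrac{1-\sqrt{5}}{2}$ satisfy $u_1+u_2=2$ with $u_2\neq u_1'$ and $u_2\neq -u_1$, and $u_1+u_2-u_1'-u_2'=0$ is a minimal vanishing relation of length four. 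Such blocks are not globally ``exceptional configurations'' of the whole tuple either: part of the tuple can consist of honest trace pairs while another part satisfies a long minimal relation. One must show that the units in any such long block lie in a finite set (after dividing by one of them they solve a non-degenerate unit equation, of which there are finitely many solutions) and then run an induction to conclude that these tuples lose at least one free exponent and contribute only $O_{k,L}((\log X)^{\rho-1})$; this is the content of the paper's Proposition \ref{prop:trace_sum_reduction}, whose Cases 2--4 are exactly the bookkeeping your normal form is missing.

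Second, and more seriously, the dominance argument cannot deliver the injectivity you need. Recovering $a_1=\lfloor \log|n|/\log\eta\rfloor+O(1)$ and iterating only shows that the parameter-to-integer map is $O_k(1)$-to-one; that proves $\NLk(X)\asymp (\log X)^{\rho}$ but loses the constant $c_k$, which is the whole point of the theorem. When consecutive exponents differ by $O(1)$ the fibres really can a priori contain several tuples, and no refinement of the dominance estimate rules this out, because the question is an exact Diophantine coincidence $\sum_j\epsilon_j\operatorname{Tr}(\eta^{a_j})+\delta=\sum_j\epsilon_j'\operatorname{Tr}(\eta^{a_j'})+\delta'$, not an inequality. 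The correct tool is the same one as in the first gap: take a minimal vanishing subsum of the difference of the two representations; if all such subsums are two-term matchings the tuples coincide, and otherwise a subsum of length $\geq 3$ forces two of the units to have one of finitely many ratios, which removes a free exponent and bounds the colliding tuples by $O_{k,L}((\log X)^{\rho-1})$. This is the paper's Proposition \ref{prop:non_unique_bound}. (For odd $k$ you must also run this argument across your three families $\delta\in\{-1,0,1\}$, since a coincidence between the $\delta=0$ and $\delta=1$ families would change the constant $3/\rho!$.) A minor remark: the appeal to Evertse--Schlickewei--Schmidt is legitimate but unnecessary in the real quadratic case, where the finiteness of non-degenerate solutions follows from an elementary lower bound for $|c_1\eta^{n_1}+\cdots+c_q\eta^{n_q}|$ (the paper's Lemma \ref{lem:sum_lb}), keeping the whole proof effective.
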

Although only very few rational integers $n$ are sums of at most $k$ units in a fixed real quadratc field $L$, 
every rational integer $n$ is the sum of two units in \emph{some} real quadratic field, e.g., for $|n|>2$ we can take the sum of conjugate units
$\frac{n\pm \sqrt{n^2-4}}{2}$.\\

Restricting to sums of \emph{exactly} $k$ units we define
$$\eNLk:=\{n\in\Z\where n=S_\vu \text{ for some }\vu\in(\OO_L^\times)^k\},$$
and its counting function
\begin{equation}
  \label{eq:def_NLk}
  \eNLk(X):=\card\{n\in\eNLk: |n|\leq X\}.
\end{equation}

That is, $\eNLk(X)$ is the number of integers $n$ with $|n|\leq X$ that can be written as the sum of exactly $k$ units. The following result is an immediate consequence of Theorem \ref{thm:main}.

\begin{corollary}\label{cor:exact_eqn}
  Let $\eta>1$ be the fundamental unit of the real quadratic field $L$, let
  $k\in\N$ and define $\rho:=\lfloor k/2\rfloor$. Then, for $X\geq 2$,
  \begin{equation*}
   \eNLk(X)= \tilde c_k\left(\frac{2\log X}{\log\eta}\right)^\rho+O_{k,L}((\log X)^{\rho-1}),
  \end{equation*}
  where
  \begin{equation*}
   \tilde c_k=
    \begin{cases}
      1/\rho! &\text{ if $k$ is even,}\\
      2/\rho! &\text{ if $k$ is odd.}
    \end{cases}
  \end{equation*}
\end{corollary}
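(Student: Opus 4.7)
The plan is to reduce Corollary~\ref{cor:exact_eqn} to Theorem~\ref{thm:main} via a parity-based inclusion--exclusion. The basic observation is the padding identity $u + (-u) = 0$: if $n$ is a sum of exactly $r$ units, then $n$ is also a sum of exactly $r+2, r+4, \ldots$ units, so $\eNLk \supseteq \tilde{\operatorname{N}}_{L,k-2}$ and, grouping the representations of length at most $k$ by parity,
\[
  \NLk \;=\; \eNLk \,\cup\, \tilde{\operatorname{N}}_{L,k-1}.
\]
Throughout, write $\Lambda := 2\log X/\log\eta$; all implicit constants depend only on $k$ and $L$.

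For \emph{even} $k$, one has $\lfloor(k-1)/2\rfloor = \rho - 1$, so Theorem~\ref{thm:main} gives $\tilde{\operatorname{N}}_{L,k-1}(X) \leq \operatorname{N}_{L,k-1}(X) = O(\Lambda^{\rho-1})$. The sandwich
\[
  \NLk(X) - \tilde{\operatorname{N}}_{L,k-1}(X) \;\leq\; \eNLk(X) \;\leq\; \NLk(X)
\]
forces $\eNLk(X) = \NLk(X) + O(\Lambda^{\rho-1})$, and the asymptotic with $\tilde c_k = c_k = 1/\rho!$ drops out of Theorem~\ref{thm:main}.

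For \emph{odd} $k$ the situation is more delicate because $\lfloor(k-1)/2\rfloor = \rho$, so $\tilde{\operatorname{N}}_{L,k-1}(X)$ is now of the same order as $\NLk(X)$. Inclusion--exclusion together with the already-established even case applied to $k-1$ yields
\[
  \eNLk(X) \;=\; \NLk(X) - \tilde{\operatorname{N}}_{L,k-1}(X) + I(X) \;=\; \frac{2}{\rho!}\Lambda^\rho + O(\Lambda^{\rho-1}) + I(X),
\]
where $I(X) := |\eNLk \cap \tilde{\operatorname{N}}_{L,k-1}|(X)$ counts the ``ambiguous'' integers $n$ admitting representations of both parities. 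The corollary reduces to the bound $I(X) = O(\Lambda^{\rho-1})$.

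Bounding $I(X)$ is the main obstacle. Every $n$ counted by $I(X)$ admits two representations $n = u_1 + \cdots + u_r = v_1 + \cdots + v_s$ with $r \leq k$ odd and $s \leq k-1$ even, hence produces a non-trivial linear relation $u_1 + \cdots + u_r - v_1 - \cdots - v_s = 0$ of length at most $2k-1$ among units of $\OO_L$. The plan is to split the count according to the combinatorial type of this relation. \emph{Non-degenerate} solutions (no proper vanishing subsum) of the resulting $S$-unit equation $\sum \pm u_i = 0$ form finitely many $\OO_L^\times$-scaling orbits by the Evertse--Schlickewei--Schmidt theorem, and since only rescaling by $\pm 1$ preserves the condition $n \in \Z$, their total contribution is $O(1)$. \emph{Degenerate} solutions, featuring a vanishing proper subsum, are controlled by induction on the length of the relation: after peeling off the subsum one is left either with a shorter relation (handled inductively) or with a parity-matched pair of shorter representations of $n$, so the contribution is bounded by $\operatorname{N}_{L,k-2}(X) = O(\Lambda^{\rho-1})$ via Theorem~\ref{thm:main} (using $\lfloor(k-2)/2\rfloor = \rho - 1$). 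Combining both contributions gives $I(X) = O(\Lambda^{\rho-1})$, completing the proof.
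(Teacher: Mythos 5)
Your parity decomposition $\NLk=\eNLk\cup\tilde{\operatorname{N}}_{L,k-1}$ and the reduction of the odd case to the bound $I(X)=O((\log X)^{\rho-1})$ are correct, and in fact more explicit than the paper, which offers no argument and simply declares the corollary an immediate consequence of Theorem~\ref{thm:main}; your even case is complete. The gap is in the odd case, in the treatment of the \emph{degenerate} relations, and it cannot be repaired in general because the bound $I(X)=O((\log X)^{\rho-1})$ is false for $L=\Q(\sqrt{5})$. There $\phi=(1+\sqrt{5})/2$ satisfies $1=\phi^{-1}+\phi^{-2}$ with both summands units, so replacing one summand $w$ of any representation by $w\phi^{-1}+w\phi^{-2}$ turns a sum of exactly $k-1$ units into a sum of exactly $k$ units. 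Hence $\tilde{\operatorname{N}}_{L,k-1}\subseteq\eNLk$, so $I(X)=\tilde{\operatorname{N}}_{L,k-1}(X)\sim\frac{1}{\rho!}\left(2\log X/\log\eta\right)^{\rho}$ and $\eNLk(X)=\NLk(X)+O(1)$ has leading constant $3/\rho!$ rather than $2/\rho!$. In particular the corollary as stated also fails for $\Q(\sqrt{5})$ and odd $k\geq 3$, so no argument can close this gap without excluding that field.

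The precise point where your proof breaks is the claim that degenerate relations are ``controlled by induction\dots\ so the contribution is bounded by $\operatorname{N}_{L,k-2}(X)$.'' A minimal vanishing subsum that mixes the two representations, e.g.\ $w\cdot(\phi^{-1}+\phi^{-2}-1)=0$ in the example above, sums to zero: it does lie in one of finitely many $\OO_L^\times$-scaling orbits, but because its value is $0$ the free scaling unit $w$ is not constrained at all by the condition $n\in\Z$ (that condition only pins down the scaling of a component whose value is a fixed nonzero multiple of $n$, which is what happens in your non-degenerate case). Moreover such a mixed subsum of odd length changes the parity of what remains, so peeling it off does not leave a parity-matched pair of shorter representations, and the induction does not close. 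For $L\neq\Q(\sqrt{5})$ one has $\cS_2=\emptyset$, so length-three mixed subsums cannot occur and the statement should be salvageable; but even then your argument would need to treat mixed minimal subsums of length at least four explicitly (finitely many scaling orbits, each tying together two of the free unit variables and thus reducing the count by one power of $\log X$, as in Section~\ref{sec:nonut}) rather than by an appeal to $\operatorname{N}_{L,k-2}(X)$.
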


Other aspects of the sets $\eNLk$ and $\NLk$, at least for $k=2$, have 
been studied before. 
Nagell asked for which number fields $L$ the number $1$ is contained in $\eNLz$. He called such number fields $L$ exceptional. Nagell's considerations culminated in \cite{Nagell:1969}, where he classified all exceptional number fields $L$ of unit rank $\leq 1$. More recently Freitas--Kraus--Siksek \cite{FKS:2021} have shown that, for any given prime $p \geq 5$, there are only finitely many cyclic degree $p$ fields $L$ that are exceptional. 

For cyclotomic fields $L = \Q(\zeta_p)$, Newman \cite[p.~89]{Newman:1974} observed that $1,2$ and $3$ are all contained in $\eNLz$  for all primes $p > 3$, and he posed the problem to explicitly determine $\operatorname{N}_{L,2}$  for cyclotomic number fields $L$.

Recently, Tinkova et.al. \cite{Tinkova:2025} considered the problem to completely determine the sets $\eNLz$ for cubic fields $L$. They resolved the problem for all cubic fields $L$ which are either cyclic or imaginary. 
Moreover, they showed that for number fields $L$ that do not contain a real
quadratic field the sets $\operatorname{N}_{L,2}$ are finite.

Moreover, quantities of similar spirit as $\operatorname{N}_{L,k}(X)$ were studied in \cite{MR2540792,MR3260868}.
\\

  There has been much activity recently and in the past regarding statistics
  for the number of fibres admitting rational or integral solutions in certain
  families of Diophantine equations. For an overview and references, we refer
  to the introduction of \cite{loughran2024leadingconstantrationalpoints}.

  Corollary \ref{cor:exact_eqn} can be interpreted in this vein, as counting
  asymptotically the number of fibres admitting integral points in a certain natural
  family of schemes parameterised by integers.

  More
  precisely, let $A:=\OO_L[t,x_0,\ldots,x_{k-1}]/(g)$, where
  \begin{equation*}
  g= x_0\cdots x_{k-1}(t-x_1-\cdots-x_{k-1})-1
  \end{equation*}
  and $\cX':=\Spec(A)$. The
  inclusion of $\OO_L[t]$ in $A$ induces a
  morphism $\cX' \to \A^1_{\OO_L}$ of schemes. The Weil
  restriction (see \cite{Ji_Li_McFaddin_Moore_Stevenson_2022})
  \begin{equation*}
    \cX := \WR_{\A^1_{\OO_L}/\A^1_{\Z}}(\cX')
  \end{equation*}
  comes with a morphism $\cX\to \A^1_\Z$. For every $n\in\Z$, we consider
  the pull-back $\cX_n:=\cX\times_{\A^1_\Z} \Spec(\Z)$ along the integral point
  $\Spec(\Z)\to\A^1_\Z$ induced by $\Z[t]\to \Z$, $t\to n$, which we call the
  \emph{fibre over $n$}. Using standard properties of the Weil restriction, we see that
  \begin{equation*}
    \cX_n \cong \WR_{\Spec(\OO_L)/\Spec(\Z)}(\cX'\times_{\A^1_{\OO_L}}\Spec(\OO_L)) =
    \WR_{\Spec(\OO_L)/\Spec(\Z)}(\Spec( A_n)),
  \end{equation*}
  where $A_n=A\otimes_{\OO_L[t]}\OO_L=\OO_L[x_0,\ldots,x_{k-1}]/(g_n)$ with
  \begin{equation*}
    g_n=x_0\cdots x_{k-1}(n-x_1-\cdots-x_{k-1})-1.
  \end{equation*}
  Hence, for every ring $B$, the set of $B$-points of $\cX_n$,
  \begin{equation*}
    \cX_n(B) = \Spec(A_n)(B\otimes_{\Z}\OO_L),
  \end{equation*}
  is in one-to-one correspondence with the set of solutions of the unit
  equation
  \begin{equation}\label{eq:B_points}
    u_1+\cdots+u_k=n
  \end{equation}
  with $u_i\in (B\otimes_\Z\OO_L)^\times$.
  Therefore, the function $\eNLk(X)$ studied in
  Corollary \ref{cor:exact_eqn} counts the set of integers $n\in\Z\cap [-X,X]$ for which
  the fibre $\cX_n$ of $\cX$ above $n$ has integral points,
  i.e. $\cX_n(\Z)\neq\emptyset$.  

  Regarding local solubility, when $k\geq 2$ it is clear that
  $\cX_n(\R)\neq \emptyset$, and straightforward to see that
  $\cX_n(\Z_p)\neq\emptyset$ whenever $p\neq 2$. Moreover,
  $\cX_n(\Z_2)\neq\emptyset$ if and only if $n\equiv k\bmod 2$ or $2$ is inert
  in $L$. Hence, in contrast to the global situation described in Corollary
  \ref{cor:exact_eqn}, a positive proportion of the fibres $\cX_n$ have points over
  $\R$ and all $\Z_p$. We give a detailed proof of the local solubility in Section \ref{sec:locsol}.\\

We end this introduction with a brief overview over the remaining sections. In Section \ref{sec:redtotracesums} we show that if $n$ is a sum of  at most $k$ units, then $n$ is a sum of traces 
of units and some summands from a finite set depending only on $k$ and $L$. 
Hence, we must count vectors $\vu$ with $\ell\leq\rho$
components of units whose trace sums have no vanishing subsums and are bounded in modulus by $X$. 
This is achieved in Section \ref{sec:tracesums}. Different unit vectors can lead to the same integer $n$ by permuting the coordinates of the vector, but also in more subtle ways. Counting these clashes is the purpose of Section \ref{sec:nonut}. In Section \ref{sec:proofthm}   
we are ready to prove Theorem \ref{thm:main}.
The final Section \ref{sec:locsol} is devoted to the 
proof of the claims about local solubility of (\ref{eq:B_points}). 

\section{Reduction to unit trace sums}\label{sec:redtotracesums}

For $\vu\in L^r$, we say that the sum $S_\vu=u_1+\cdots+u_r$ has \emph{no
  vanishing subsum}, if
\begin{equation*}
\sum_{i\in I}u_i\neq 0\quad \text{ for all }\quad \emptyset\neq I\subseteq\{1,\ldots,r\}, 
\end{equation*}
and \emph{no
  vanishing proper subsum}, if
\begin{equation*}
\sum_{i\in I}u_i\neq 0\quad \text{ for all }\quad \emptyset\neq I\subsetneq\{1,\ldots,r\}.
\end{equation*}

Moreover, we let $\vu'$ denote its conjugate. I.e., $\vu':=(u_1',\ldots,u_r')$,
where $u_i'=\sigma(u_i)$, with $\sigma:L\to L$ the non-trivial
$\Q$-automorphism.

At several places we will use the well-known fact that for any $T\in\N$ the unit equation
\begin{equation}\label{eq:unit_equation}
  v_1+\cdots+v_{T}=1,
\end{equation}
has only finitely many \emph{non-degenerate} solutions
$\vv\in(\OO_L^\times)^{T}$, i.e. solutions in which no subsum of the left-hand
side vanishes (e.g. \cite[Theorem 2A in Chapter V]{Schmidt:1991}). We denote the set
of these solutions, depending only on $L$ and $T$, by $\cS_{T}$. 

While for general number fields this is a deep fact (based on the subspace
theorem), proved by Evertse \cite{MR766298} as well as van der Poorten and Schlickewei
\cite{MR1119694}, we only need the case when $L$ is real quadratic, where it is a consequence of the simple Lemma \ref{lem:sum_lb}.

\begin{proposition}\label{prop:trace_sum_reduction}
  There is a chain of finite subsets $\cU_0\subseteq \cU_1\subseteq \cdots$ of
  $\OO_L^\times$, such that the following holds true.

  If $r\in\N$, $n\in\Z\smallsetminus\{0\}$ and $\vu\in(\OO_L^\times)^{r}$ such
  that $n=S_\vu$ with no vanishing subsum, then we also have
  $n=S_{(\vv,\vv',\vxi)}$, where
  \begin{equation*}
    \vv\in(\OO_L^\times)^{\ell}\ \text{ and }\ \vxi\in \cU_s^s,\quad\text{ with
    }\quad\ell,s\in \N_0 \ \text{ satisfying }\ 2\ell+s\leq r.
  \end{equation*}
 \end{proposition}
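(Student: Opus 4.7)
My plan is to induct on $r$, constructing the chain $\cU_0\subseteq\cU_1\subseteq\cdots$ as I go. The base case $r\leq 1$ is immediate: for $r=0$ the hypothesis $n\neq 0$ is violated, and for $r=1$ we have $u_1\in\Z\cap\OO_L^\times=\{\pm 1\}$, so one takes $\vv$ empty and $\vxi=(u_1)$ with $\cU_1\supseteq\{-1,1\}$.

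In the inductive step ($r\geq 2$), the first move is a greedy conjugate pairing. If there are distinct indices $i,j\in\{1,\ldots,r\}$ with $u_j=u_i'$, then $u_i+u_j=\Trace(u_i)\in\Z$, and the sub-tuple $\vu^*$ of length $r-2$ obtained by deleting $u_i$ and $u_j$ has integer sum $n-\Trace(u_i)$ while inheriting the no-vanishing-subsum property. Either $\vu^*$ is empty (so $r=2$ and $\vv=(u_i)$, $\vxi$ empty works), or its sum is non-zero (else $\vu^*$ itself would be a vanishing subsum of $\vu$), in which case the inductive hypothesis applies and prepending $(u_i,u_i')$ to $(\vv^*,\vv^{*\prime})$ gives $2\ell+s\leq 2+(r-2)=r$.

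The residual case is that no such conjugate pair exists in $\vu$. Here my goal is to show $\vu$ lies in a finite subset $\cF_r\subseteq(\OO_L^\times)^r$ depending only on $r$ and $L$, so that one may take $\vv$ empty, $\vxi=\vu$, $s=r$, and enlarge $\cU_r$ to contain $\cU_{r-1}$ together with every coordinate of every tuple in $\cF_r$. The argument begins from the vanishing relation $V:=\sum_{i=1}^r u_i-\sum_{i=1}^r u_i'=0$ (valid since $n\in\Z$) and examines its proper vanishing subsums $W=\sum_{i\in I}u_i-\sum_{j\in J}u_j'=0$. If $I=J\subsetneq\{1,\ldots,r\}$ then $\sum_{i\in I}u_i\in\Z$ is a proper non-zero rational subsum of $\vu$; if $I\neq J$, combining $W$ with its $\sigma$-image yields the identity $\sum_{I\cup J}u_i+\sum_{I\cap J}u_i\in\Z$, which produces a proper integer subsum when either $I\cup J\subsetneq\{1,\ldots,r\}$ with $I\cap J=\emptyset$ or $I\cup J=\{1,\ldots,r\}$ with $I\cap J\neq\emptyset$ (the latter proper because $I\neq J$), and in the remaining sub-case $I\cup J=\{1,\ldots,r\}$ with $I\cap J=\emptyset$ rearranges to $\sum_{i\in I}\Trace(u_i)=n$, giving the decomposition $n=S_{(\vu|_I,\vu|_I')}$ with $2\ell+s=2\min(|I|,|I^c|)\leq r$. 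In each successful sub-case the inductive hypothesis applies on two strictly shorter sub-tuples, whose outputs combine using $\cU_{s_1}\cup\cU_{s_2}\subseteq\cU_{s_1+s_2}$ to preserve $2\ell+s\leq r$.

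What remains are the cases in which $V$ admits no proper vanishing subsum at all, or every proper vanishing subsum has $I\neq J$ with $I\cap J\neq\emptyset$ and $I\cup J\subsetneq\{1,\ldots,r\}$. In the first sub-case, dividing $V=0$ by $u_1$ gives a non-degenerate $(2r-1)$-term unit equation with finite solution set (Evertse's theorem, or elementarily via Lemma \ref{lem:sum_lb} in the real quadratic setting), so each ratio $u_i/u_1$, $u_j'/u_1$ is determined up to finitely many possibilities, and the integrality $u_1\cdot(\text{fixed nonzero element of }L)=n$ pins $u_1$ into the finite intersection $\OO_L^\times\cap\Q\beta^{-1}$ for some $\beta\in L^\times$; this is finite because $\OO_L^\times=\{\pm\eta^m\}$ and only finitely many powers of $\eta$ lie on any one-dimensional $\Q$-subspace of $L$. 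The main obstacle is the remaining "intermediate" sub-case, where the weighted relation $\sum_{I\cup J}u_i+\sum_{I\cap J}u_i\in\Z$ alone does not directly yield a proper integer subsum; one must either combine several such weighted relations (one per minimal vanishing subsum and its $\sigma$-partner) to extract an honest proper integer subsum, or exploit the $\sigma$-action on the minimal-subsum decomposition to reduce to a rigid unit equation as in the first sub-case. This combinatorial analysis of the intermediate configuration is the technical heart of the section.
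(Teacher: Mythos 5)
Your overall strategy is close to the paper's: induct on $r$, build $\cU_t$ from the non-degenerate solutions of the $(2t-1)$-term unit equation together with the observation that at most two units $u$ satisfy $uS_\vv\in\Z\smallsetminus\{0\}$ for a fixed $\vv$, handle the rigid case (no proper vanishing subsum of $V=\sum u_i-\sum u_i'$) via the unit equation, and handle the case $I=J\subsetneq\{1,\ldots,r\}$ by splitting off the integer $m=\sum_{i\in I}u_i$. Those parts are correct. But the ``intermediate'' configuration you flag at the end --- every proper vanishing subsum $\sum_{i\in I}u_i-\sum_{j\in J}u_j'=0$ having $I\neq J$, $I\cap J\neq\emptyset$ and $I\cup J\subsetneq\{1,\ldots,r\}$ --- is a genuine gap, not a routine verification: your method of forming $W\pm\sigma(W)$ only ever produces the weighted relation $\sum_{I\cup J}u_i+\sum_{I\cap J}u_i\in\Z$, whose coefficients $1$ and $2$ prevent you from reading off a proper integer-valued subsum of $\vu$, and no combination of several such relations is given.

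The missing idea is the paper's identity \eqref{eq:I_complement_rep}: subtracting the vanishing subsum from $n=S_\vu$ gives the \emph{mixed} representation $n=\sum_{i\in I^c}u_i+\sum_{j\in J}u_j'$ with $|I^c|+|J|=r-|I|+|J|$ terms. Choosing the vanishing subsum \emph{minimal} and normalising $|I|\geq|J|$ (conjugate and negate to swap $I$ and $J$), either $|I|>|J|$, in which case this representation is strictly shorter and the induction applies directly, or $|I|=|J|$ with $I\neq J$, which forces $I^c\cap J\neq\emptyset$; then for $j_0\in I^c\cap J$ both $u_{j_0}$ and $u_{j_0}'$ occur in the mixed representation, so one subtracts $\Trace_{L/\Q}(u_{j_0})\in\Z$, inducts on the remaining $\leq r-2$ terms, and appends $u_{j_0}$ to $\vv$. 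This is exactly what resolves your intermediate case (note your condition $I\cup J\subsetneq\{1,\ldots,r\}$ is not the relevant dichotomy; the paper's is $I^c\cap J\neq\emptyset$ versus $I=J$). Your opening ``greedy conjugate pairing'' (literal equality $u_j=u_i'$) does not substitute for this, since the relevant pairing happens between index sets of the mixed representation, not between coordinates of the original tuple. Everything else in your plan, including the finiteness of $\OO_L^\times\cap\Q\beta^{-1}$ and the direct treatment of the case $J=I^c$, is sound.
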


\begin{remark}\ 
  \begin{enumerate}
  \item We interpret $\vxi\in\cU_0^{0}$ as the empty tuple, so that
    $(\vv,\vv',\vxi)=(\vv,\vv')$.
  \item If $\vxi\in(\OO_L^\times)^1$ and $S_{(\vv,\vv',\vxi)}\in\Z$, then
    $\vxi\in\{\pm 1\}$.
  \end{enumerate}
\end{remark}

\begin{corollary}\label{cor:trace_sum_reduction}
  In the conclusion of Proposition \ref{prop:trace_sum_reduction}, we may
  additionally require that the sum $S_{(\vv,\vv',\vxi)}$ has no vanishing subsum.
\end{corollary}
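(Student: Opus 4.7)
The plan is to bootstrap from Proposition \ref{prop:trace_sum_reduction} itself: whenever the representation $n=S_{(\vv,\vv',\vxi)}$ it produces admits a vanishing subsum, I would strip that subsum off and reapply the proposition to the resulting shorter unit tuple. The key observation enabling this is elementary: if $n=\sum_i w_i$ has a non-empty vanishing subsum indexed by $I$, then $n=\sum_{i\notin I}w_i$ is a strictly shorter sum whose entries are still units of $\OO_L$ and whose value is still $n$. Iterating this deletion finitely many times yields a unit sub-tuple $\tilde\vu$ of $(\vv,\vv',\vxi)$ with $n=S_{\tilde\vu}$ and no vanishing subsum at all. Since $n\neq 0$, $\tilde\vu$ is automatically non-empty.

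I would formalise this as strong induction on $r$. For the base case $r=1$, the hypothesis gives $\vu=(u_1)$ with $u_1=n\neq 0$, and Proposition \ref{prop:trace_sum_reduction} forces $2\ell+s\leq 1$; since $n\neq 0$ rules out $\ell=s=0$, we must have $\ell=0,s=1$, and the singleton sum $n=\xi\in\cU_1$ trivially has no vanishing subsum. For the inductive step, I would apply Proposition \ref{prop:trace_sum_reduction} once to $\vu$ to obtain an initial decomposition $n=S_{(\vv_0,\vv_0',\vxi_0)}$ with $2\ell_0+s_0\leq r$; if this sum already has no vanishing subsum, we are done. Otherwise, iteratively deleting vanishing subsums as above produces a unit sub-tuple $\tilde\vu$ with $n=S_{\tilde\vu}$, no vanishing subsum, and $|\tilde\vu|\leq 2\ell_0+s_0-1<r$. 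Applying the inductive hypothesis to $\tilde\vu$ then yields a representation $n=S_{(\vv,\vv',\vxi)}$ with $\vxi\in\cU_s^s$, $2\ell+s\leq|\tilde\vu|<r$, and no vanishing subsum, exactly as required.

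The whole procedure is essentially bookkeeping, with all the substantive content supplied by Proposition \ref{prop:trace_sum_reduction}; the finite descent is forced by the strict decrease of the total length of the unit tuple at each outer iteration, so there is no real obstacle beyond setting up the induction correctly. One mild point worth flagging is that the sub-tuple $\tilde\vu$ produced by the stripping step need not itself be of the structured form $(\vv,\vv',\vxi)$; this is precisely why we must reinvoke Proposition \ref{prop:trace_sum_reduction}, rather than simply deleting subsums from the decomposition once, which in turn is what makes the inductive set-up the natural framework.
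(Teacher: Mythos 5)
Your proposal is correct and follows essentially the same route as the paper: strip off vanishing subsums to obtain a strictly shorter unit tuple summing to $n$, reapply Proposition \ref{prop:trace_sum_reduction}, and let the strict decrease in length force termination (which you package as a strong induction on $r$ rather than as a terminating iteration). If anything, your version is marginally more careful than the paper's, since you explicitly iterate the deletion until $S_{\tilde\vu}$ has no vanishing subsum before reinvoking the proposition, which is needed to satisfy its hypothesis.
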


\begin{proof}
  If $n=S_{(\vv,\vv',\vxi)}$ has a vanishing subsum, then the remaining
  summands form a tuple $\tilde\vu\in(\OO_L^\times)^{\tilde r}$ with $\tilde
  r\leq r-2$ and $n=S_{\tilde\vu}$. We apply Proposition
  \ref{prop:trace_sum_reduction} to $\tilde r$ and $\tilde\vu$ in place of $r$
  and $\vu$, which yields a
  representation $n=S_{(\vv_1,\vv_1',\vxi_1)}$. If the latter has no vanishing
  subsums, we are done. Otherwise, repeat the above, leading to an even shorter
  representation of the form $n=S_{(\vv_2,\vv_2',\vxi_2)}$. This process has to
  stop with a tuple $(\vv_s,\vv_s',\vxi_s)$ with at most $r$ coordinates, such
  that $n=S_{(\vv_s,\vv_s',\vxi_s)}$ has no vanishing subsums.
\end{proof}

\subsection{Proof of Proposition \ref{prop:trace_sum_reduction}}

\begin{lemma}\label{lem:trace_sum_reduction_base_case}
  Let $u_1,u_2\in \OO_L^\times$ with $u_1+u_2\in\Z$. Then one of the following
  three situations holds:
  \begin{enumerate}
  \item $u_2=u_1'$,
  \item $u_2=-u_1$,
  \item $\{u_1,u_2\}\in \left\{\left\{\frac{3\epsilon_1 +\sqrt{5}}{2},\frac{\epsilon_2-\sqrt{5}}{2}\right\},\left\{\frac{3\epsilon_1 -\sqrt{5}}{2},\frac{\epsilon_2+\sqrt{5}}{2}\right\}: \epsilon_1,\epsilon_2\in\{\pm 1\} \right\}$.
  \end{enumerate}
\end{lemma}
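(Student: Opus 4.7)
The plan is to parametrise $u_1$ and $u_2$ by their rational and irrational parts and reduce to an elementary Diophantine analysis. For $i\in\{1,2\}$, write $2u_i=\alpha_i+\beta_i\sqrt d$ with $\alpha_i,\beta_i\in\Z$; since $u_i\in\OO_L^\times$, taking norms yields $\alpha_i^2-d\beta_i^2=\pm 4$. The hypothesis $u_1+u_2\in\Z$ forces the $\sqrt d$-coefficients to cancel, i.e.\ $\beta_2=-\beta_1$, and also forces $\alpha_1+\alpha_2\in 2\Z$. Subtracting the two norm relations gives
\begin{equation*}
\alpha_1^2-\alpha_2^2=4\bigl(\Norm(u_1)-\Norm(u_2)\bigr)\in\{-8,0,8\}.
\end{equation*}

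In the first case $\alpha_1^2=\alpha_2^2$, one of $\alpha_2=\alpha_1$ or $\alpha_2=-\alpha_1$ holds. Combined with $\beta_2=-\beta_1$ these produce exactly $u_2=u_1'$ (alternative (1)) and $u_2=-u_1$ (alternative (2)). In the remaining case $\alpha_1^2-\alpha_2^2=\pm 8$, I would factor as $(\alpha_1-\alpha_2)(\alpha_1+\alpha_2)=\pm 8$; both factors share the parity of $\alpha_1+\alpha_2$, which is even, so writing $\alpha_1+\alpha_2=2A$ and $\alpha_1-\alpha_2=2B$ reduces the equation to $AB=\pm 2$. Enumerating the eight sign-choices for $(A,B)$ gives $\{|\alpha_1|,|\alpha_2|\}=\{1,3\}$, and substituting into $\alpha_i^2-d\beta_i^2=\pm 4$ forces $9-d\beta_1^2=\pm 4$ together with $1-d\beta_1^2=\mp 4$, hence $d\beta_1^2=5$. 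As $d$ is squarefree this is only possible for $d=5$, $\beta_1=\pm 1$. Rearranging the resulting tuples by the sign of $\sqrt 5$ in $u_1$ and the sign $\epsilon_2$ of the rational part of $u_2$ recovers exactly the explicit list in alternative (3).

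The bookkeeping in the last case is the main obstacle: one has to check that every sign pattern arising from the Diophantine enumeration appears in the displayed list of case (3) and, conversely, that every pair in that list indeed satisfies both norm equations and has integer trace sum. This is a small finite verification, but needs to be done carefully because the two subsets (one for each sign of $\beta_1$) look deceptively symmetric. Everything else in the argument is linear algebra over $\Q$ together with a short inspection of divisors of $\pm 8$.
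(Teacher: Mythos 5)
Your argument is correct and follows essentially the same route as the paper's proof: write $2u_i=\alpha_i+\beta_i\sqrt d$, use the norm equations $\alpha_i^2-d\beta_i^2=\pm4$, split according to whether $\Norm(u_1)=\Norm(u_2)$ or not, and in the latter case deduce $\alpha_1^2-\alpha_2^2=\pm8$, hence $\{|\alpha_1|,|\alpha_2|\}=\{1,3\}$ and $d=5$. The only cosmetic difference is that you factor $(\alpha_1-\alpha_2)(\alpha_1+\alpha_2)=\pm8$ via $AB=\pm2$ where the paper reads off $(a_1,a_2)=(\pm3,\pm1)$ directly; the final sign bookkeeping you flag is the same small finite check the paper leaves implicit.
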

\begin{proof}
  As $u_1,u_2\in\OO_L$ with $u_1+u_2\in\Z$, we can write
  \begin{equation*}
    u_1=\frac{a_1+b\sqrt{d}}{2},\quad u_2=\frac{a_2-b\sqrt{d}}{2}
  \end{equation*}
  with $a_1,a_2,b\in\Z$. As $u_1,u_2$ are units, we have $u_1u_1'=\pm 1$ and
  $u_2u_2'=\pm 1$.

  If $u_1u_1'=u_2u_2'$, then $a_1^2-db^2=a_2^2-db^2$ and thus $a_2=\pm
  a_1$, yielding situation \emph{(1)} or \emph{(2)}.

  Now suppose that $u_1u_1'=-u_2u_2'$, and without loss of generality
  $u_1u_1'=1$. Then $a_1^2-db^2=4$ and $a_2^2-db^2=-4$, which implies that
  $a_1^2-a_2^2=8$, and thus $(a_1,a_2)=(\pm 3,\pm 1)$.

  This gives $9-db^2=4$, and thus $d=5,b=\pm 1$. Hence, we are in situation \emph{(3)}.
\end{proof}

We construct the sets $\cU_0\subseteq \cU_1\subseteq\cdots$ in Proposition
\ref{prop:trace_sum_reduction} as follows. Take $\cU_0=\cU_1:=\{\pm 1\}$ and
$\cU_2$ to consist of $\pm 1$ and
possibly the elements appearing in case \emph{(3)} of Lemma
\ref{lem:trace_sum_reduction_base_case}.

Now let $t\geq 3$ and assume that we have already constructed the sets
$\cU_0\subseteq\cdots\subseteq\cU_{t-1}$. For every non-degenerate solution
$(v_1,\ldots,v_{2t-1})\in \cS_{2t-1}$ of the unit equation
\eqref{eq:unit_equation} with $T=2t-1$, write
$\vv:=(v_1,\ldots,v_t)$. For each of the at most $\card\cS_{2t-1}$ choices of
$\vv$, there are at most two values of $u\in\OO_L^\times$ with
$u S_\vv\in \Z\smallsetminus\{0\}$, and we take $\cU_{t}$ to be the union of
$\cU_{t-1}$ with all coordinates $uv_i$ of all tuples $\vu:=u\vv$ as above.

Having described the sets $\cU_t$, we now prove Proposition
\ref{prop:trace_sum_reduction} by induction on $r$. For $r=1$, the conclusion
holds trivially. For $r=2$, it follows from Lemma \ref{lem:trace_sum_reduction_base_case}.

Hence, let $r\geq 3$ and assume that the proposition's conclusion holds for all
sums of less than $r$ terms.

From $n=S_\vu$, we see that also $n=S_{\vu'}$ and thus
\begin{equation*}
  0 = n-n = u_1+\cdots+u_r-u_1'-\cdots-u_r'.
\end{equation*}
Hence, there are subsets $I,J\subseteq \{1,\ldots,r\}$, such that
\begin{equation}
  \label{eq:minimal_vanishing_subsum}
  \sum_{i\in I}u_i - \sum_{j\in J}u_j'
\end{equation}
is a \emph{minimal} vanishing subsum, i.e. no proper subsum vanishes. As
$n=S_\vu$ has no vanishing subsums, we conclude that $I,J\neq
\emptyset$. Moreover, we may assume without loss of generality that $|I|\geq
|J|$, as conjugating and multiplying by $-1$ allow us to exchange the roles of
$I$ and $J$. We observe that then
\begin{equation}\label{eq:I_complement_rep}
  n=\sum_{i=1}^ru_i=\sum_{i=1}^ru_i-\left(\sum_{i\in I}u_i-\sum_{j\in
      J}u_j'\right) = \sum_{i\in I^c}u_i+\sum_{j\in J}u_j',
\end{equation}
where $I^c=\{1,\ldots,r\}\smallsetminus I$. We now distinguish between four
different cases.

\subsection*{Case 1: $|I|>|J|$}
As the sum on the right-hand side of \eqref{eq:I_complement_rep} has $|I^c|+|J| = r-|I|+|J|<r$ terms,
we find a minimal subsum with $\q<r$ terms which equals $n$. As the
subsum is minimal, it has no vanishing subsums. Hence, the induction hypothesis
yields a representation $n=S_{(\vv,\vv',\vxi)}$ with
$\vv\in(\OO_L^\times)^\ell$ and $\vxi\in \cU_{s}^s$, such that $2\ell+s\leq
\q< r$. This is enough for the proposition's conclusion to hold.
\subsection*{Case 2: $I^c\cap J\neq \emptyset$}

Let $j_0\in I^c\cap J$ and $m:=u_{j_0}+u_{j_0}'\in\Z$. Then by
\eqref{eq:I_complement_rep} we get the representation
\begin{equation*}
  n-m=\sum_{i\in (I\cup\{j_0\})^c}u_i + \sum_{J\smallsetminus\{j_0\}}u_j'.
\end{equation*}
There is a minimal subsum of the right-hand side that equals $n-m$, with  $\q\leq r-(|I|+1)+|J|-1\leq r-2$ terms. Again, the
induction hypothesis yields a representation $n-m=S_{(\vv_1,\vv_1',\vxi)}$
with $\vv_1\in(\OO_L^\times)^{\ell_1}$ and $\vxi_1\in \cU_s^{s}$, such that
$2\ell_1+s\leq \q\leq r-2$. 

Then we may take $\ell:=\ell_1+1$ and 
$\vv:=(\vv_1,u_{j_0})\in (\OO_L^\times)^\ell$,
giving $2\ell+s\leq r$ and $n=S_{(\vv,\vv',\vxi)}$ as desired.

\subsection*{Case 3: $I=J=\{1,\ldots,r\}$}

In this case, the sum $u_1+\cdots+u_r-u_1'-\cdots-u_r'=0$ has no vanishing proper subsums, and
hence so does the sum
\begin{equation}\label{eq:unit_sum_eq}
  v_1+\cdots+v_{2r-1}=1,
\end{equation}
where
\begin{equation*}
  v_i:=\frac{u_i}{u_r'}\ (1\leq i\leq r),\quad v_i:=\frac{-u_{i-r}'}{u_r'}\
  (r+1\leq i\leq 2r-1).
\end{equation*}
Hence, $(v_1,\ldots,v_{2r-1})\in \cS_{2r-1}$. Writing
$\vv:=(v_1,\ldots,v_r)$, then $\vu = u_r'\vv$, which implies that
$u_r'S_\vv=S_\vu=n\in\Z\smallsetminus\{0\}$. By construction of $\cU_r$, this
implies that all coordinates of $\vu=u_r'\vv$ are in $\cU_r$. Hence, the
proposition's conclusion is satisfied with $\ell=0$ and $\vxi=\vu\in\cU_r^r$.

\subsection*{Case 4: $I=J\subsetneq\{1,\ldots,r\}$}

In this case, we see from \eqref{eq:minimal_vanishing_subsum} that
\begin{equation*}
\sum_{i\in I}u_i =
\left(\sum_{u\in I}u_i\right)'
\end{equation*}
and thus $m:=\sum_{i\in I}u_i \in \Q\cap \OO_L = \Z$. As $S_\vu$ has no
vanishing subsums by hypothesis, we see that $m\notin\{0,n\}$, and also the above
representation of $m$ has no vanishing subsums. As $1\leq \q:=|I|<r$, the induction hypothesis
yields a representation $m=S_{(\vv_1,\vv_1',\vxi_1)}$ with
$\vv_1\in(\OO_L^\times)^{\ell_1}$ and $\vxi_1\in\cU_{s_1}^{s_1}$, such that
$2\ell_1+s_1\leq \q$. 

Moreover, we may write
\begin{equation*}
  n-m=\sum_{i=1}^ru_i-\sum_{i\in I}u_i = \sum_{i\in I^c}u_i,
\end{equation*}
again a representation without vanishing subsums, as $S_\vu$ has no vanishing
subsums. As $1\leq r-\q=|I^c|<r$, the induction hypothesis yields a
representation
$n-m=S_{(\vv_2,\vv_2',\vxi_2)}$ with $\vv_2\in(\OO_L^\times)^{\ell_2}$ and
$\vxi_2\in\cU_{s_2}^{s_2}$, such that $2\ell_2+s_2\leq r-\q$.

Then we may take $\ell=\ell_1+\ell_2$,
$s:=s_1+s_2$, $\vv=(\vv_1,\vv_2)\in(\OO_L^\times)^\ell$ and
$\vxi:=(\vxi_1,\vxi_2)\in \cU_{s}^s$ to obtain $2\ell+s\leq r$ and
$n=S_{(\vv,\vv',\vxi)}$.\qed

\section{Counting unit trace sums}\label{sec:tracesums}
Throughout this section we fix $\ell\in \N$. Let $\mathbf c=(c_1, \ldots, c_\ell) \in (L^\times)^\ell$. 
This section is devoted to study the following counting function 
\begin{equation*}
  T_{L,\ell}^{\mathbf{c}}(X):= \card\left\{(u_1, \ldots, u_\ell)\in\OO_L^\times \ :\
    \begin{aligned}
      &\left|\Trace_{L/\Q}(c_1u_1) + \cdots + \Trace_{L/\Q}(c_\ell u_\ell)\right|\leq X;\\
      &c_1u_1 + \cdots +c_\ell u_\ell+c'_1u'_1+\cdots+c'_\ell u'_\ell\\
      &\text{has no vanishing subsum};\\
      &|u_i|\geq 1 \text{ for }1\leq i \leq \ell.
    \end{aligned}
\right\}
\end{equation*}

If we drop the third condition $|u_i|\geq 1$ then we can replace  any of the coordinates $u_i$ by their conjugates $u'_i$ so that for each
$\vu$ counted in $T_{L,\ell}^{\mathbf{c}}(X)$ we have at most $2^\ell$ vectors. Hence, dropping the condition $|u_i|\geq 1$ gives a set of cardinality at most $2^\ell T_{L,\ell}^{\mathbf{c}}(X)$.

The main result of this section
provides an asymptotic formula for $T_{L,\ell}^{\mathbf{c}}(X)$ as $X$ gets large.

\begin{proposition}\label{prop:Counting-Tracesums}
For $X\geq 2$ we have 
$$T_{L,\ell}^{\mathbf{c}}(X)=\left(\frac{2\log X}{\log\eta}\right)^\ell+O_{L,\ell,\mathbf{c}}\left((\log X)^{\ell-1}\right). $$
\end{proposition}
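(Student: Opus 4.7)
The plan is to parameterize $\OO_L^\times = \{\pm \eta^m : m \in \Z\}$, so each $u_i$ corresponds to a sign $\epsilon_i \in \{\pm 1\}$ and a non-negative integer $m_i$ (the condition $|u_i|\geq 1$ forces $m_i \geq 0$ since $\eta>1$). Writing $M := \log X / \log\eta$, the desired main term equals the cardinality $(2(M+1))^\ell$ of the box $\{\pm 1\}^\ell\times\{0,1,\ldots,\lfloor M\rfloor\}^\ell$ up to $O(M^{\ell-1})$, so the strategy is to show that the three defining constraints of $T_{L,\ell}^{\mathbf{c}}(X)$ cut out this box up to a boundary and exceptional discrepancy of $O_{L,\ell,\mathbf{c}}(M^{\ell-1})$.

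Under this parameterization,
$$S := \sum_{i=1}^\ell\Trace_{L/\Q}(c_iu_i) = \sum_{i=1}^\ell\epsilon_ic_i\eta^{m_i}+\sum_{i=1}^\ell\epsilon_ic_i'(\eta')^{m_i},$$
whose second sum is $O_{\mathbf{c},\ell}(1)$ since $|\eta'|\leq 1$. I plan to establish a two-sided comparison between the constraint $|S|\leq X$ and the box constraint $m_i\leq M$. The easy direction is that if $m_i\leq M-C_0$ for a constant $C_0=C_0(L,\ell,\mathbf{c})$ large enough, then $|S|\leq X$ follows from the triangle inequality. The harder direction is that if $|S|\leq X$ and the full $2\ell$-term sum defining $S$ has no vanishing subsum, then $\eta^{m_i}\ll X$ for every $i$. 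Here I intend to invoke Lemma \ref{lem:sum_lb} (the real-quadratic unit-equation lower bound already referenced in Section \ref{sec:redtotracesums}): a sum $\sum_j\gamma_j\eta^{n_j}$ with $\gamma_j$ drawn from a fixed finite set of non-zero elements and no vanishing subsum satisfies $|\sum_j\gamma_j\eta^{n_j}|\gg\max_j|\gamma_j\eta^{n_j}|$. Applied to our $2\ell$ terms this forces $m_i\leq M+O(1)$.

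The main obstacle is controlling the exceptional tuples whose $2\ell$-term expansion of $S$ does admit a vanishing subsum. Such a subsum is an equation
$$\sum_{i\in I}\epsilon_ic_i\eta^{m_i}+\sum_{j\in J}\epsilon_jc_j'(\eta')^{m_j}=0$$
for some non-empty $I\cup J\subseteq\{1,\ldots,\ell\}$. I will pass to a \emph{minimal} vanishing subsum and normalize by one of its terms to obtain a non-degenerate equation $v_1+\cdots+v_T=1$ with each $v_s$ of the form $\alpha_s\eta^{n_s}$ for $\alpha_s$ in a fixed finite set depending only on $\mathbf{c}$ and $\ell$; by the set $\cS_T$ such non-degenerate solutions are finite. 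Each such solution fixes the differences $m_i-m_{i_0}$ (and sums $m_j+m_{i_0}$, coming from conjugates) for indices within the subsum, i.e.\ it confines the tuple $(m_1,\ldots,m_\ell)\in\Z^\ell$ to a finite union of affine sublattices of dimension $\leq\ell-1$. Intersecting with $[0,M]^\ell$ and summing over the finitely many subsum patterns and sign choices yields $O_{L,\ell,\mathbf{c}}(M^{\ell-1})$ bad tuples.

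Combining these estimates, the counting function equals $(2(M+1))^\ell+O(M^{\ell-1}) = (2\log X/\log\eta)^\ell+O((\log X)^{\ell-1})$, as claimed. The delicate point will be ensuring that the affine sublattices cut out by minimal subsum relations are genuinely of dimension at most $\ell-1$ rather than $\ell$; this should follow from the fact that each minimal relation is a non-trivial $\Z$-linear constraint on the $m_i$, combined with the $\Z$-linear independence of distinct powers of $\eta$.
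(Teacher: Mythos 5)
Your proposal is correct, and its upper-bound half (parameterize $u_i=\pm\eta^{m_i}$ with $m_i\geq 0$, then apply Lemma \ref{lem:sum_lb} to the full $2\ell$-term sum to force $m_i\leq \log X/\log\eta+O(1)$) is exactly the paper's Lemma \ref{lem:ub-Tracesums}. Where you diverge is the lower bound. You work with the entire box $\{\pm1\}^\ell\times\{0,\ldots,\lfloor M\rfloor\}^\ell$ and \emph{excise} the exceptional tuples whose $2\ell$-term expansion admits a vanishing subsum, showing via minimal vanishing subsums and finiteness of non-degenerate solutions that these lie on $O_{L,\ell,\mathbf{c}}(1)$ affine sublattices of codimension $\geq 1$. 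The paper's Lemma \ref{lem:lb-Tracesums} instead \emph{constructs} a good subset of full density: it imposes the gap conditions $n_i\geq C_2$ and $|n_i-n_j|\geq C_2$, which cost only $O((\log X)^{\ell-1})$ tuples and let one rule out vanishing subsums by a direct domination argument (the largest power of $\eta$ in any putative zero subsum strictly exceeds the sum of the rest), with no appeal to finiteness of unit-equation solutions. The paper's route is more elementary and self-contained at this point; your route is closer in spirit to what the paper does later in Proposition \ref{prop:non_unique_bound}, and it does go through, but two details need care. First, after normalizing a minimal vanishing subsum you do not land in $\cS_T$ as defined (the terms $\alpha_s\eta^{n_s}$ are not units), so you need the finiteness of non-degenerate solutions of $\alpha_1w_1+\cdots+\alpha_Tw_T=1$ in units $w_s$ with fixed coefficients $\alpha_s$; in the real quadratic case this again follows from Lemma \ref{lem:sum_lb} by bounding all exponent gaps, so nothing is lost, but it should be stated. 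Second, the codimension claim rests not on ``$\Z$-linear independence of distinct powers of $\eta$'' but on the fact that $\eta$ is not a root of unity, so each relation $\eta^{a}=\lambda$ (with $a$ a difference $m_i-m_j$, a sum $m_i+m_j$, or $2m_i$, and $\lambda$ from a finite set) has at most one integer solution $a$; checking the three shapes of constraints confirms each pattern cuts out codimension at least one. With those two points made precise, your argument yields the same asymptotic.
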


\subsection{Proof of Proposition \ref{prop:Counting-Tracesums}}
We prove the upper and lower bound separately.
To prove the required upper bound we need
the following lemma. A different version was proved by the last author in \cite[Proposition 3.2]{Ziegler:2019}. 
\begin{lemma}\label{lem:sum_lb}
 Let $\qu\geq 1$ be an integer, $\alpha\in \C$ with $|\alpha|>1$, $\mathbf c=(c_1, \ldots, c_\qu) \in (\C^\times)^\qu$ and $n_1\geq \cdots \geq n_\qu$ integers.
 Then there exists $C=C(\alpha,\mathbf{c})>0$ depending only on $\alpha$ and $\mathbf c$, such that 
 \begin{alignat*}1
 \left|c_1\alpha^{n_1}+\cdots+c_\qu \alpha^{n_\qu}\right|>C\alpha^{n_1},
\end{alignat*}
 provided that $c_1\alpha^{n_1}+\cdots+c_\qu \alpha^{n_\qu}$ has no vanishing subsum.  
\end{lemma}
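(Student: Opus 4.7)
The plan is to prove this by contradiction using a compactness/subsequence argument. First I would normalise: dividing through by $\alpha^{n_1}$ and setting $e_i:=n_1-n_i\in\Z_{\geq 0}$ (so $e_1=0\leq e_2\leq\cdots\leq e_q$), the claim reduces to producing $C>0$ depending only on $\alpha$ and $\mathbf{c}$ such that
$$\bigl|c_1+c_2\alpha^{-e_2}+\cdots+c_q\alpha^{-e_q}\bigr|\geq C$$
for every tuple $(e_2,\ldots,e_q)$ arising from a sum with no vanishing subsum.

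Next, suppose for contradiction that no such $C$ exists. Then there is a sequence of admissible tuples $(e_2^{(k)},\ldots,e_q^{(k)})_{k\geq 1}$ along which the above modulus tends to $0$. Since each $e_i^{(k)}$ is a non-negative integer, after passing to a subsequence I may assume that for each $i\in\{1,\ldots,q\}$ the sequence $e_i^{(k)}$ is either eventually equal to some constant $e_i$ or tends to $+\infty$. Let $I\subseteq\{1,\ldots,q\}$ be the set of indices of the first type; since $e_1^{(k)}=0$ always, we have $1\in I$, so $I\neq\emptyset$.

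Because $|\alpha|>1$, the terms $c_i\alpha^{-e_i^{(k)}}$ with $i\notin I$ tend to $0$. Passing to the limit in the modulus therefore yields
$$\sum_{i\in I}c_i\alpha^{-e_i}=0.$$
For all sufficiently large $k$ one has $e_i^{(k)}=e_i$ for every $i\in I$, so multiplying this identity through by $\alpha^{n_1^{(k)}}$ rewrites it as $\sum_{i\in I}c_i\alpha^{n_i^{(k)}}=0$ — a vanishing subsum of the $k$-th sum, contradicting the hypothesis that each sum in the sequence has no vanishing subsum.

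The main (and essentially only) difficulty is the bookkeeping needed to see that the limiting identity genuinely corresponds to an honest vanishing subsum of one of the $k$-th sums. This works cleanly because integer exponents produce a sharp dichotomy (bounded or unbounded) after passing to a subsequence, and for $i\in I$ the exponent $e_i^{(k)}$ is \emph{exactly} equal to $e_i$ for $k$ large, so the limiting equation translates verbatim back to a vanishing subsum for that specific $k$.
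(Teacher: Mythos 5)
Your argument is correct, but it takes a genuinely different route from the paper. After the same normalisation (dividing by $\alpha^{n_1}$ and working with the exponent gaps $e_i=n_1-n_i\geq 0$), the paper proves the lower bound $|c_1+c_2\alpha^{-e_2}+\cdots+c_q\alpha^{-e_q}|\geq C(\alpha,\mathbf{c})$ by induction on $q$: assuming the bound for $q$ terms, it picks an explicit threshold $M_1$ so that for $e_{q+1}\geq M_1$ the extra term perturbs the sum by at most $C(\alpha,\mathbf{c})/2$, while for $e_{q+1}\leq M_1$ the exponents range over a finite set on which the minimum of the (nowhere-vanishing) sum is positive. You replace this induction by a single compactness step: extract a subsequence along which each exponent is eventually constant or tends to $+\infty$, pass to the limit, and read off a vanishing subsum supported on the set $I$ of bounded indices, which is nonempty because $e_1=0$; multiplying back by $\alpha^{n_1^{(k)}}$ correctly turns the limiting identity into an honest vanishing subsum of a specific term of the sequence. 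Both proofs rest on the same dichotomy --- bounded exponents give finitely many configurations whose minimum is positive by the non-vanishing hypothesis, unbounded exponents contribute negligibly since $|\alpha|>1$ --- but yours is shorter and dispenses with the threshold bookkeeping, at the cost of yielding the constant $C$ by pure existence rather than as a minimum over an explicitly bounded finite set as in the paper's inductive construction. For the paper's application (implicit $O$-constants in an asymptotic count) this loss of effectiveness is immaterial.
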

\begin{proof}
This is trivial for $\qu=1$.  
For $\qu\geq 2$ we need to show that 
\begin{alignat}1\label{ineq:fcam}
f_{\mathbf{c}, \alpha}(\mathbf{m}):=\left|c_1+c_2\alpha^{-m_2}+\cdots+c_k \alpha^{-m_\qu}\right|\geq C(\alpha,\mathbf{c})>0
\end{alignat}
for every integer vector $\mathbf{m}=(m_2,\ldots,m_\qu)$ with $0\leq m_2\leq \cdots \leq m_\qu$, provided
no subsum of $c_1+c_2\alpha^{-m_2}+\cdots+c_\qu \alpha^{-m_\qu}$ vanishes.
First suppose $\qu=2$. Set $M_0=(\log|2c_2/c_1|)/\log |\alpha|$, so that 
$$f_{\mathbf{c}, \alpha}(\mathbf{m})=|c_1+c_2\alpha^{-m_2}|\geq \left|\frac{c_1}{2}\right|$$
whenever $m_2\geq M_0$. On the other hand, by the non-vanishing subsum hypothesis, we have
$$\min_{0\leq m_2\leq M_0}f_{\mathbf{c}, \alpha}(\mathbf{m})=:C_0>0.$$
This proves the claim for $\qu=2$ with $C(\alpha,\mathbf{c})=\min\{|c_1/2|, C_0\}$.

Now let $\qu\geq 2$ be given and suppose (\ref{ineq:fcam}) holds, assuming the non-vanishing hypothesis. 
Let $c_{\qu+1}\in \C^\times$. Set 
$$M_1=\frac{\log\left|\frac{2c_{\qu+1}}{C(\alpha,\mathbf{c})}\right|}{\log |\alpha|},$$ 
and consider the integer $\qu$-tuple $(\mathbf{m},m_{\qu+1})$ with $0\leq m_2\leq \cdots \leq m_\qu\leq m_{\qu+1}$.
First suppose that $m_{\qu+1}\geq M_1$ then 
\begin{alignat*}1
f_{(\mathbf{c},c_{\qu+1}), \alpha}((\mathbf{m},m_{\qu+1}))&=|c_1+c_2\alpha^{-m_2}+\cdots +c_\qu\alpha^{-m_\qu}+c_{\qu+1}\alpha^{-m_{\qu+1}}|\\
&\geq f_{\mathbf{c}, \alpha}(\mathbf{m})-|c_{\qu+1}\alpha^{-M_1}|\\
&\geq \frac{C(\alpha,\mathbf{c})}{2}.
\end{alignat*}
Next suppose that $m_{\qu+1}\leq M_1$. Using the non-vanishing subsum hypothesis, we note that
\begin{alignat*}1
\min_{0\leq m_2\leq \cdots \leq m_{\qu+1}\leq M_1}|c_1+c_2\alpha^{-m_2}+\cdots +c_{\qu+1}\alpha^{-m_{\qu+1}}|=:C_1>0.
\end{alignat*}
Note that $C_1$ depends only on $\alpha, C(\alpha,\mathbf{c})$ and $c_{\qu+1}$.
Hence, we conclude 
\begin{alignat*}1
f_{(\mathbf{c},c_{\qu+1}), \alpha}((\mathbf{m},m_{\qu+1}))\geq \min\left\{\frac{C(\alpha,\mathbf{c})}{2},C_1\right\}
\end{alignat*}
for all integer $k$-tuple $(\mathbf{m},m_{\qu+1})$ with $0\leq m_2\leq \cdots \leq m_\qu\leq m_{\qu+1}$.
This completes the proof of the lemma. 
\end{proof}

Before we apply Lemma \ref{lem:sum_lb} to derive the required upper bound for 
$T_{L,\ell}^{\mathbf{c}}(X)$ let us point out that 
Lemma \ref{lem:sum_lb} also implies the finiteness
of the non-degenerate solutions $\vv\in(\OO_L^\times)^{T}$
to (\ref{eq:unit_equation}). For this it suffices to note that the conjugate $\vv'$ is also a solution of (\ref{eq:unit_equation}), so that we can assume
$n_1\geq |n_T|$.

\begin{lemma}\label{lem:ub-Tracesums}
 For $X\geq 2$ we have
 $$T_{L,\ell}^{\mathbf{c}}(X)\leq \left(\frac{2\log X}{\log\eta}\right)^\ell+O_{L,\ell,\mathbf{c}}\left((\log X)^{\ell-1}\right). $$
\end{lemma}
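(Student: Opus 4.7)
The plan is to parametrize $\OO_L^\times$ via the fundamental unit $\eta$, rewrite the trace sum as a $\Z$-linear combination of powers of $\eta$, and apply Lemma~\ref{lem:sum_lb} to bound the largest exponent.

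Fix an embedding $L\hookrightarrow\R$ with $\eta>1$. Every $u\in\OO_L^\times$ has a unique representation $u=\epsilon\eta^n$ with $\epsilon\in\{\pm 1\}$ and $n\in\Z$, and $|u|\geq 1$ if and only if $n\geq 0$. Since $\eta'=\pm\eta^{-1}$ (with sign $N_{L/\Q}(\eta)$), writing $u_i=\epsilon_i\eta^{n_i}$ with $n_i\geq 0$ yields
\[
\sum_{i=1}^{\ell}\Trace_{L/\Q}(c_iu_i)=\sum_{i=1}^{\ell}\bigl(\epsilon_i c_i\eta^{n_i}+\delta_i c_i'\eta^{-n_i}\bigr)
\]
for suitable $\delta_i\in\{\pm 1\}$ determined by $\epsilon_i$, the parity of $n_i$, and $N_{L/\Q}(\eta)$.

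The next step is to reorder the $2\ell$ summands on the right so that their $\eta$-exponents are non-increasing, obtaining $\sum_{j=1}^{2\ell}\gamma_j\eta^{e_j}$ with $e_1\geq\cdots\geq e_{2\ell}$ and $\gamma_j\in L^\times$. These summands are in bijection with the $2\ell$ summands of $c_1u_1+\cdots+c_\ell u_\ell+c_1'u_1'+\cdots+c_\ell'u_\ell'$, so the no-vanishing-subsum hypothesis transfers verbatim. As the signs $(\epsilon_i,\delta_i)$ and the sorting permutation together vary over a finite set, the coefficient tuples $(\gamma_j)_j$ range over a finite subset of $(L^\times)^{2\ell}$ depending only on $\mathbf c$ and $\ell$. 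Applying Lemma~\ref{lem:sum_lb} with $\alpha=\eta$ to each, and taking the minimum of the resulting constants, yields $C=C(L,\ell,\mathbf c)>0$ with
\[
\left|\sum_{j=1}^{2\ell}\gamma_j\eta^{e_j}\right|>C\eta^{e_1}.
\]
Since $e_1=\max_i n_i$, combining this with $\bigl|\sum_i\Trace_{L/\Q}(c_iu_i)\bigr|\leq X$ gives $\max_i n_i\leq \log X/\log\eta+O_{L,\ell,\mathbf c}(1)=:M$.

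Finally, the number of tuples $(u_i)_{i=1}^\ell=(\epsilon_i\eta^{n_i})_i$ with $\epsilon_i\in\{\pm 1\}$ and $0\leq n_i\leq M$ is at most $(2(M+1))^\ell=(2\log X/\log\eta)^\ell+O_{L,\ell,\mathbf c}((\log X)^{\ell-1})$, which is the required upper bound. The main technical content lies in the application of Lemma~\ref{lem:sum_lb}: one needs uniformity of $C$ across the finite family of coefficient vectors arising from all sign patterns and permutations, together with the bijective correspondence of summands to transfer the non-vanishing subsum hypothesis; the final counting is then immediate.
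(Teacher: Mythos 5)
Your proposal is correct and follows essentially the same route as the paper: parametrize each $u_i$ as $\pm\eta^{n_i}$ with $n_i\geq 0$, apply Lemma~\ref{lem:sum_lb} with $\alpha=\eta$ and $q=2\ell$ to bound $\max_i n_i$ by $\log X/\log\eta+O_{L,\ell,\mathbf c}(1)$, and count the resulting tuples. You simply spell out details the paper leaves implicit (sorting the exponents, the finiteness of the coefficient patterns, and the transfer of the no-vanishing-subsum hypothesis), all of which are handled correctly.
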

\begin{proof}
Let us consider an $\ell$-tuple $(u_1, \ldots, u_\ell)\in T_{L,\ell}^{\mathbf{c}}(X)$ so that
$$\left|\Trace_{L/\Q}(c_1u_1) + \cdots + \Trace_{L/\Q}(c_\ell u_\ell)\right|\leq X,$$
and no subsum of
$$c_1u_1 + \cdots +c_\ell u_\ell+c'_1u'_1+\cdots+c'_\ell u'_\ell$$
vanishes. Recall that $|u_i|\geq 1$, so that each $u_i$ has the form $u_i=\pm \eta^{n_i}$ with $n_i\in \N_0$. Applying Lemma \ref{lem:sum_lb} with $\alpha=\eta$ and $\qu=2\ell$ and taking logs gives
\begin{align*}
\max_{1\leq i\leq \ell}\{|n_i|\}\log\eta& \leq \log \left|\Trace_{L/\Q}(c_1u_1) + \cdots + \Trace_{L/\Q}(c_\ell u_\ell)\right|+O_{\mathbf{c}, L}(1)\\
& \leq \log X+O_{\mathbf{c}, L}(1).
\end{align*}
This immediately yields the upper bound
\begin{equation}\label{eq:up-bound-Tsum}
T_{L,\ell}^{\mathbf{c}}(X)\leq \left(\frac{2\log X}{\log\eta}\right)^\ell+O_{\mathbf{c}, L}\left((\log X)^{\ell-1}\right).
\end{equation}
\end{proof}

Next we prove the required lower bound for $T_{L,\ell}^{\mathbf{c}}(X)$.

\begin{lemma}\label{lem:lb-Tracesums}
 For $X\geq 2$ we have
 $$T_{L,\ell}^{\mathbf{c}}(X)\geq \left(\frac{2\log X}{\log\eta}\right)^\ell+O_{L,\ell,\mathbf{c}}\left((\log X)^{\ell-1}\right). $$
\end{lemma}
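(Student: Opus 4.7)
The plan is to exhibit a large explicit subset of $(\OO_L^\times)^\ell$ contributing to $T_{L,\ell}^{\mathbf{c}}(X)$, and to bound those tuples giving rise to a vanishing subsum separately. Every $u\in\OO_L^\times$ with $|u|\geq 1$ is uniquely of the form $u=\epsilon\eta^n$ with $\epsilon\in\{\pm 1\}$ and $n\in\N_0$. I set $A:=\sum_{i=1}^\ell(|c_i|+|c_i'|)$ and $M:=\lfloor(\log X-\log A)/\log\eta\rfloor$ (bounded ranges of $X$ for which $M<0$ are absorbed by the error term), and consider the candidate set $\cT$ of all tuples $(u_1,\ldots,u_\ell)$ with $\epsilon_i\in\{\pm 1\}$ and $0\leq n_i\leq M$, which has cardinality
\[
\card\cT=(2(M+1))^\ell=\left(\frac{2\log X}{\log\eta}\right)^\ell+O_{L,\ell,\mathbf{c}}\!\left((\log X)^{\ell-1}\right).
\]
For every such tuple one has $|u_i|\leq\eta^M$ and $|u_i'|\leq 1$, so the triangle inequality gives $|\Trace_{L/\Q}(c_1u_1)+\cdots+\Trace_{L/\Q}(c_\ell u_\ell)|\leq A\eta^M\leq X$, and the constraint $|u_i|\geq 1$ holds automatically.

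The core of the proof is then to show that the bad subset $\cE\subseteq\cT$, consisting of those tuples for which the sum $c_1u_1+\cdots+c_\ell u_\ell+c_1'u_1'+\cdots+c_\ell'u_\ell'$ admits a vanishing subsum, satisfies $\card\cE=O_{L,\ell,\mathbf{c}}(M^{\ell-1})$. Once this is established, the asymptotic follows from $T_{L,\ell}^{\mathbf{c}}(X)\geq\card\cT-\card\cE$.

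To estimate $\card\cE$, I union-bound over the finitely many pairs $(S_1,S_2)$ of subsets of $\{1,\ldots,\ell\}$ with $(S_1,S_2)\neq(\emptyset,\emptyset)$, indexing which of the terms $c_iu_i$ and $c_j'u_j'$ appear in the chosen subsum. Using $\eta'=\pm\eta^{-1}$, hence $(\eta')^n=(\pm 1)^n\eta^{-n}$, and conditioning on the finitely many choices of signs $(\epsilon_i)_{i=1}^\ell$ and (when $\eta\eta'=-1$) parities of the $n_j$ for $j\in S_2$, the vanishing condition $\sum_{i\in S_1}c_iu_i+\sum_{j\in S_2}c_j'u_j'=0$ reduces to an equation $\sum_{i\in S_1}a_i\eta^{n_i}+\sum_{j\in S_2}b_j\eta^{-n_j}=0$ with fixed nonzero constants $a_i,b_j$ depending only on $L,\mathbf{c}$ and the conditioning. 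Writing $T:=S_1\cup S_2$, the coordinates $n_k$ with $k\notin T$ are free and contribute a factor $(M+1)^{\ell-|T|}$. For any choice of the remaining $(n_i)_{i\in T\setminus\{i_0\}}$ the resulting equation in $n_{i_0}$ is linear in $\eta^{n_{i_0}}$ or in $\eta^{-n_{i_0}}$ (when $i_0$ lies in only one of $S_1,S_2$), or, after multiplying through by $\eta^{n_{i_0}}$, quadratic in $\eta^{n_{i_0}}$ (when $i_0\in S_1\cap S_2$). In every case it has at most two real solutions, hence at most two admissible values of $n_{i_0}\in\N_0$, producing $O(M^{|T|-1})$ admissible tuples $(n_i)_{i\in T}$ and $O_{L,\ell,\mathbf{c}}(M^{\ell-1})$ bad tuples per pattern. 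Summing over the $O_{\ell}(1)$ patterns yields $\card\cE=O_{L,\ell,\mathbf{c}}(M^{\ell-1})$.

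The main obstacle is this last step: while the vanishing of a subsum is morally a codimension-one condition on $(n_1,\ldots,n_\ell)\in[0,M]^\ell$, one has to handle cleanly the parity ambiguity coming from the relation $\eta'=\pm\eta^{-1}$ and the fact that when both $c_{i_0}u_{i_0}$ and $c_{i_0}'u_{i_0}'$ appear in the chosen subsum the one-variable equation becomes genuinely quadratic rather than linear. Both issues are resolved by the finite case analysis outlined above, and neither inflates the error beyond the claimed $O_{L,\ell,\mathbf{c}}(M^{\ell-1})=O_{L,\ell,\mathbf{c}}((\log X)^{\ell-1})$.
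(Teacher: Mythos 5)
Your proof is correct, and it shares its skeleton with the paper's: both arguments produce the lower bound from the explicit family $u_i=\pm\eta^{n_i}$ with exponents ranging over a box of side length $\log X/\log\eta+O_{\mathbf{c}}(1)$, verify the trace bound by the triangle inequality, and then must deal with the no-vanishing-subsum condition, which is the only real content. It is at this last step that the two routes genuinely diverge. The paper imposes spacing conditions on the exponents from the outset --- $n_i\geq C_2$ and $|n_i-n_j|\geq C_2$ for a constant $C_2$ depending on $\ell$ and $\mathbf{c}$ --- which discard only $O((\log X)^{\ell-1})$ tuples and make a vanishing subsum impossible by a domination argument: the term with the largest exponent in any putative vanishing subsum would have to equal the sum of the remaining terms, which the spacing forces to be strictly smaller in modulus. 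You instead keep the entire box and excise the bad tuples afterwards, via a union bound over the finitely many subsum patterns $(S_1,S_2)$ together with the observation that, after conditioning on signs and (because $\eta'=\pm\eta^{-1}$) parities, each pattern pins one exponent down to $O(1)$ values given the others --- a linear or quadratic equation in $\eta^{n_{i_0}}$ with nonzero leading coefficient --- so each pattern contributes $O(M^{\ell-1})$ bad tuples. Your route costs some bookkeeping (the sign/parity conditioning and the linear-versus-quadratic case split, both of which you handle correctly), but it is somewhat more robust in that it actually bounds the number of tuples in the box admitting a vanishing subsum rather than arranging for there to be none; the paper's route is shorter because the spacing trick lets it avoid solving any equations. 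Both are elementary and yield the same main term and error term.
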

\begin{proof}
Set 
\begin{alignat*}1
C_1=C_1(\mathbf{c})=\frac{\max_i\{1,|c_i|,|c'_i|\}}{\min_j\{1,|c_j|,|c'_j|\}}, \text{ and }
C_2=\frac{\log(2\ell C_1)}{\log \eta}.
\end{alignat*}
It suffices to prove the bound for $X>2C_1\ell$.
Next let us count the $\mathbf{n}\in \N^\ell$ satisfying
\begin{itemize}
\item[1)] $\ell C_1 (\eta^{n_i}+1)< X$ $(1\leq i\leq \ell)$
\item[2)] $n_i\geq C_2$ $(1\leq i\leq \ell)$
\item[3)] $|n_i-n_j|\geq C_2$ $(1\leq i< j \leq \ell)$
\end{itemize}
The number of those $\mathbf{n}$ with 1) is 
\begin{alignat*}1
\left\lfloor \frac{\log(\frac{X}{C_1\ell}-1)}{\log \eta} \right \rfloor^\ell=\left(\frac{\log X}{\log\eta}\right)^\ell+O_{L,\ell,\mathbf{c}}\left((\log X)^{\ell-1}\right).
\end{alignat*}
And of those $\mathbf{n}$ only $O_{L,\ell,\mathbf{c}}\left((\log X)^{\ell-1}\right)$ fail 2) and only
$O_{L,\ell,\mathbf{c}}\left((\log X)^{\ell-1}\right)$ fail 3). Hence, we have 
\begin{alignat*}1
\left(\frac{\log X}{\log\eta}\right)^\ell+O_{L,\ell,\mathbf{c}}\left((\log X)^{\ell-1}\right)
\end{alignat*}
 $\mathbf{n}\in \N^\ell$ that satisfy 1), 2) and 3) simultaneously.
 Each of these $\mathbf{n}$ produces exactly $2^\ell$ unit vectors $\mathbf{u}\in {\OO_L^{\times}}^\ell$ with modulus of the coordinates $\geq 1$
 via $u_i=\pm \eta^{n_i}$ ($1\leq i\leq \ell$). Note that these 
 \begin{alignat*}1
\left(\frac{2\log X}{\log\eta}\right)^\ell+O_{L,\ell,\mathbf{c}}\left((\log X)^{\ell-1}\right)
\end{alignat*}
unit vectors $\mathbf{u}$ are pairwise distinct. We claim that all these 
unit vectors $\mathbf{u}$ are counted in $T_{L,\ell}^{\mathbf{c}}(X)$. First note that $|u'_i|=\eta^{-n_i}<1$,
and thus it follows from 1) that 
$$\left|\sum_{i=1}^\ell \Trace_{L/\Q}(c_i u_i)\right|<X.$$
Next suppose that 
$$\sum_{i=1}^\ell \Trace_{L/\Q}(c_i u_i)=c_1u_1+c'_1u'_1+\cdots +c_\ell u_\ell+c'_\ell u'_\ell$$
has a vanishing subsum, say
\begin{alignat}1\label{eq:zerosum}
v_1+\cdots +v_s=0
\end{alignat}
with $2\leq s\leq 2\ell$. After permuting the coordinates of $\mathbf{c}$ we can assume that $v_i= d_i \eta^{m_i}$
where $d_i\in \{\pm c_i,\pm c'_i\}$, and $m_1<m_2<\cdots <m_s$ are integers with $m_s-m_{s-1}\geq C_2$. The latter is a consequence of 2) and 3) for the positive integers $n_i$.
Dividing the zero-sum (\ref{eq:zerosum}) by $d_s$ yields
\begin{alignat*}1
\eta^{m_s}&=|(d_1/d_s)\eta^{m_1}+\cdots +(d_{s-1}/d_s)\eta^{m_{s-1}}|\\
&\leq (s-1)C_1\eta^{m_{s-1}}\\
&\leq (2\ell-1)C_1\eta^{m_{s}-C_2}\\
&= \frac{(2\ell-1)C_1}{2\ell C_1}\eta^{m_{s}}\\
&<\eta^{m_s}.
\end{alignat*}
This contradiction shows that no subsum vanishes, and therefore completes the proof of the lemma.
\end{proof}

Combining  Lemma \ref{lem:ub-Tracesums} and Lemma \ref{lem:lb-Tracesums} proves Proposition \ref{prop:Counting-Tracesums}

\section{Upper bounds for non-unique tuples}\label{sec:nonut}

\begin{definition}\label{def:equiv}
  We define an equivalence relation on $\Omega:=\bigcup_{m\in\N}L^m$ as
  follows: for $\vu\in L^m$ and $\vw\in L^n$, we have $\vu\sim\vw$ if and only
  if $n=m$ and $\vw$ arises from $\vu$ by a permutation of the coordinates.
\end{definition}

Let $\cU_t\subseteq\OO_L^\times$ be the finite subset from Proposition
\ref{prop:trace_sum_reduction} and define 
\begin{equation*}
  \cF_{t}:= \cU_0^0\cup\cU_1^1\cup\cdots\cup\cU_{t}^{t}.
\end{equation*}
Recall that we defined $\rho:=\lfloor k/2\rfloor$.
For $0\leq \ell\leq \rho$, we
consider the sets
\begin{equation}
  \cT_{k,\ell} := \left\{\vu=(\vv,\vv',\vxi)\where
  \begin{aligned}
    &\vv\in(\OO_L^\times)^\ell,\ \vxi\in\cF_{k-2\ell};\\
    &S_\vu\in\Z\text{ with no vanishing subsums;}\\
    & |v_i|\geq 1 \text{ for }1\leq i\leq \ell.
  \end{aligned}\right\}
\end{equation}
and $\cT_{k,\ell}(X):= \{\vu\in\cT_{k,\ell}\where |S_\vu|\leq X\}$. Next, we
define the subset of $\cT_{k,\rho}(X)$ of tuples $\vu$ that do
not represent $S_\vu$ essentially uniquely,
\begin{equation*}
  \cE_{k}(X):=\left\{\vu\in\cT_{k,\rho}(X)\where \exists \vutil\in
      \cT_{k,\rho}(X)\text{ such that }\vutil\not\sim\vu\text{ and }S_{\vutil}=S_\vu\right\}.
  \end{equation*}
  The main result of this section is an upper bound for the size of $\cE_k(X)$.

\begin{proposition}\label{prop:non_unique_bound}
  We have
  \begin{equation*}
    \card\cE_k(X)\ll_{k,L}(\log X)^{\rho -1}.
  \end{equation*}
\end{proposition}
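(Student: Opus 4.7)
The plan is to attach to each $\vu\in\cE_k(X)$ a non-trivial $\Z$-linear relation on $(n_1,\dots,n_\rho)$ — where we write $v_i=\pm\eta^{n_i}$ with $n_i\in\N_0$ — having bounded integer coefficients and bounded right-hand side, chosen from a finite list depending only on $k$ and $L$. Since for each fixed such relation the number of lattice points $(n_1,\dots,n_\rho)\in\N_0^\rho$ with $n_i\ll_{k,L}\log X$ satisfying it is $O_{k,L}((\log X)^{\rho-1})$, summing over the finite list will give the desired estimate.

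For each $\vu\in\cE_k(X)$ I would fix one witness $\vutil=(\tilde\vv,\tilde\vv',\tilde\vxi)\in\cT_{k,\rho}(X)$, let $A$ and $B$ denote the multisets of coordinates of $\vu$ and $\vutil$, and put $C=A\cap B$, $G=A\setminus C$, $H=B\setminus C$ (all multiset operations). Since $\vu\not\sim\vutil$ means $A\neq B$, and both $G\subseteq A$ and $H\subseteq B$ inherit the no-vanishing-subsum property from $S_\vu,S_\vutil$, both $G$ and $H$ are non-empty with $G\cap H=\emptyset$ and common non-zero sum. Decomposing $\sum_{g\in G}g-\sum_{h\in H}h=0$ into minimal vanishing subsums $T_1,\dots,T_p$, every $|T_j|\geq 3$: a hypothetical $2$-term subsum $\{a,-a\}$ would either contradict the no-vanishing-subsum property of $G$ or $H$, or (if split between $G$ and $-H$) would force $a\in G\cap H=\emptyset$.

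By Lemma \ref{lem:sum_lb} applied to each $T_j$ (equivalently, finiteness of non-degenerate solutions to \eqref{eq:unit_equation} with $T=|T_j|-1\le 2k-1$), the pairwise ratios of elements inside $T_j$ lie in a finite subset of $L$ depending only on $k$ and $L$; writing each unit as $\pm\eta^{e}$, this means the exponents within each $T_j$ agree up to $O_{k,L}(1)$. There are only finitely many combinatorial shapes for the decomposition and finitely many admissible ratio patterns per shape. For each fixed shape and pattern, combining (i) these ratio constraints within the $T_j$ with (ii) the identifications forced by the multiset equality $A\setminus G=C=B\setminus H$ — each $c\in C$ equates a coordinate of $\vu$ with one of $\vutil$, producing $n_i=\tilde n_j$ or a boundary $n_i=0$ — yields a system of $\Z$-linear equations with bounded constants on $(n_1,\dots,n_\rho,\tilde n_1,\dots,\tilde n_\rho)$. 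Eliminating the $\tilde n_j$ produces the sought non-trivial relation on $(n_1,\dots,n_\rho)$.

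The main obstacle is showing that this elimination genuinely leaves a non-trivial relation on the $n_i$. The delicate situation is when the ratio constraints in (i) would bijectively pair each $n$-exponent arising from $G$ with a $\tilde n$-exponent arising from $H$, so that (i) by itself imposes no relation on $\vv$. In this case (ii) saves the day: comparing on the two sides of $A\setminus G=B\setminus H$ the multiplicities of elements of modulus strictly greater than, equal to, and strictly less than $1$, at least one of the coordinates $v_i$ or $v_i'$ is forced to have modulus $1$, i.e.\ $n_i=0$. This boundary condition is the desired non-trivial $\Z$-linear relation on the $n_i$, and reduces the dimension from $\rho$ to $\rho-1$, completing the estimate.
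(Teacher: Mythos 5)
Your strategy is viable and, in its organisation, genuinely different from the paper's. The paper strips off matched coordinates one at a time, isolates a \emph{single} minimal vanishing subsum of $S_\vu-S_{\vutil}$ containing at least two coordinates of $\vu$, concludes from $\cS_T$ that these two coordinates have a ratio from a finite set, and then feeds the resulting $(\rho-1)$-parameter family directly into Proposition \ref{prop:Counting-Tracesums}, with three cases according to where the two constrained coordinates sit in $(\vv,\vv',\vxi)$. Your multiset decomposition $A=C\sqcup G$, $B=C\sqcup H$, the full decomposition into minimal vanishing subsums $T_j$, and the passage to lattice points on hyperplanes (using that $n_i\ll_{k,L}\log X$ by Lemma \ref{lem:sum_lb}) implement the same underlying mechanism. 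Everything up to and including the ratio constraints is correct: $G,H$ are non-empty, multiset-disjoint, with common non-zero sum; each $|T_j|\geq 3$; each $T_j$ meets both $G$ and $-H$; and the pairwise ratios inside each $T_j$ lie in a finite set. The genuine merit of your version is that it confronts head-on the case where all the unit-equation constraints fall on $\vutil$ rather than $\vu$ -- the case the paper's proof passes over with ``by symmetry''.

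The weak point is that your resolution of the delicate case is asserted rather than proved, and as stated the assertion is not self-evident: if both $\vu$ and $\vutil$ carried a $\pm1$ slot, the counts of modulus-$1$ elements on the two sides of $A\setminus G=B\setminus H$ could balance without any $n_i$ vanishing. What makes the claim true is a narrowing you omit. In the delicate case every $T_j$ contains \emph{exactly one} element of $G$ (two distinct coordinates of $(\vv,\vv',\vxi)$ with a fixed ratio always yield a non-trivial relation $\epsilon_1 n_{i_1}-\epsilon_2 n_{i_2}=\mathrm{const}$, $n_{i_1}+n_{i_2}=\mathrm{const}$, $2n_i=\mathrm{const}$ or $n_i=\mathrm{const}$), hence at least two elements of $-H$, so $|H|\geq 2|G|$; since $|H|-|G|=|B|-|A|\in\{-1,0,1\}$, this forces $|G|=1$, $|H|=2$, $\vu$ of shape $(\vv,\vv')$ and $\vutil$ of shape $(\tilde\vv,\tilde\vv',\pm1)$ (in particular $k$ odd). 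One must further observe that no element of $H$ can have modulus $1$, since its fixed ratio to the unique $g\in G$ would pin down the value of $g$ and hence give $n_{i_0}=\mathrm{const}$. Only after all this does the coordinate $\pm1$ of $\vutil$ necessarily survive into $C=A\setminus G$, which consists solely of coordinates of $(\vv,\vv')$, forcing some $n_i=0$. With this combinatorial narrowing supplied your proof closes; without it, the decisive sentence of your last paragraph is a claim rather than an argument.
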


\subsection{Proof of Proposition \ref{prop:non_unique_bound}}
Let $\vu\in(\OO_L^\times)^r$, $\vutil\in(\OO_L^\times)^s$ with $1\leq r,s\leq
k$, $\vu\not\sim\vutil$ and $S_\vu=S_{\vutil}=n\in\Z$, such that
both representations of $n$ have no vanishing subsums and $1\leq |n|\leq X$.

Then there are $I\subseteq\{1,\ldots,r\}$ and $J\subseteq\{1,\ldots,s\}$, such
that
\begin{equation}\label{eq:mvss_I_J}
  0=S_\vu-S_{\vutil}=\sum_{i\in I}u_i - \sum_{j\in J}{\util}_j
\end{equation}
is a \emph{minimal vanishing subsum}, i.e.  no subsum on the right-hand side
vanishes. As both $S_\vu$ and $S_{\vutil}$ have no vanishing subsums, it follows
that $I,J\neq\emptyset$.

If $|I|=|J|=1$, then we consider the complements
$I^c=\{1,\ldots,r\}\smallsetminus I$ and $J^c=\{1,\ldots,s\}\smallsetminus J$, 
and take a minimal vanishing subsum of
\begin{equation*}
  0=\sum_{i\in I^c}u_i-\sum_{j\in J^c}\util_j.
\end{equation*}
Continuing this way, we either find a minimal vanishing subsum of the form
\eqref{eq:mvss_I_J} with $|I|+|J|\geq 3$, or $\vu\sim{\vutil}$. As the latter was
excluded from the start, we may thus assume by symmetry that our minimal
vanishing subsum \eqref{eq:mvss_I_J} satisfies $|I|\geq 2$ and
$|J|\geq 1$.

For any $j_0\in J$, we thus have
\begin{equation*}
  \sum_{i\in I}\frac{u_i}{\util_{j_0}} - \sum_{j\in
    J\smallsetminus\{j_0\}}\frac{\util_j}{\util_{j_0}} = 1,
\end{equation*}
with no vanishing subsums. Write $u:=\util_{j_0}$ for simplicity, then
\begin{equation*}
  \vw := \frac{1}{u}((u_i)_{i\in I},(-\util_j)_{j\in J\smallsetminus\{j_0\}})\in
  (\OO_L^\times)^T,\quad T=|I|+|J|-1,
\end{equation*}
is a nondegenerate solution of the unit equation \eqref{eq:unit_equation}, whence
$\vw\in \cS_T$. Hence, for one of at most $\card\cS_T$ values of
$\vc=(c_i)_{i\in I}$, we have $u_i=c_iu$ for all $i\in I$.

We conclude that
\begin{equation*}
\vu \sim ((c_iu)_{i\in I},(u_i)_{i\in \{1,\ldots,r\}\smallsetminus I}).
\end{equation*}
As $|I|\geq 2$, we have decreased the number of free variables in
$\OO_L^\times$ by at least one, at the cost of introducing the coefficients
$\vc$.

For any $\vu\in\cE_k(X)$, we moreover know that $\vu\in\cT_{k,\rho}(X)$, and thus
\begin{equation*}
  ((c_iu)_{i\in I},(u_i)_{i\in \{1,\ldots,r\}\smallsetminus I}) \sim \vu =
  (\vv,\vv',\vxi)\text{ with }\vv\in(\OO_L^\times)^{\rho}\text{
    and }\vxi\in\cF_{k-\rho}.
\end{equation*}
Hence, if $k$ is even, then $r=k$ and $\vxi$ is the empty tuple. If $k$ is odd,
then either $r=k-1$ and $\vxi$ is the empty tuple, or $r=k$ and $\vxi\in\{\pm
1\}$.

As $|I|\geq 2$, there are $1\leq i_1<i_2\leq r$ with $\{i_1,i_2\}\subseteq I$,
and thus $u_{i_j}=c_{i_j}u$ for $j=1,2$.

Fixing $i_1,i_2$ and the $c_{i_j}$, we now distinguish a few different
cases, showing in each case that the number of tuples $\vu$ satisfying the
above conditions is $\ll_{k,L}(\log X)^{\rho-1}$.

\subsection*{Case 1: $i_2=2\rho +1$}
In this case, $k$ is odd and $i_2=r=k$. Therefore, $\vu=(\vv,\vv',\pm 1)$ and
one coordinate of $\vv$ (and $\vv'$) is also fixed, say with value $\pm a$,
where $a$ depends only on $c_{i_1}$ and $c_{i_2}$. Using Proposition
\ref{prop:Counting-Tracesums}, we get at most
\begin{equation*}
  \ll T_{L,\rho-1}^{(1,\ldots,1)}(X+1+|a+a'|)
  \ll_{k,L}(\log X)^{\rho-1}
\end{equation*}
possibilities for the value of $\vv$, and thus of $\vu$.

\subsection*{Case 2: $i_2=i_1+\rho\leq 2\rho $}
In this case, $c_{i_2}u = u_{i_2} = u_{i_1}'=c_{i_1}'u'$, and thus
\begin{equation*}
  u^2 = \pm\frac{u}{u'}=\pm\frac{c_{i_1}'}{c_{i_2}}.
\end{equation*}
This leaves only finitely many values of $u$, and thus of
$v_{i_1}=u_{i_1}=c_{i_1}u$. Fixing $v_{i_1}$ and using again Proposition
\ref{prop:Counting-Tracesums}, we get $\ll_{k,L}(\log X)^{\rho-1}$ choices for $\vv$, and thus for $\vu=(\vv,\vv',\vxi)$. 

\subsection*{Case 3: $i_2\leq  2\rho $ and $i_2\neq i_1+\rho$}
We may assume that $i_1< i_2\leq \rho$, possibly replacing $u_{i_j}$ by
$u_{i_j}'$. 
%Moreover, re-ordering the coordinates of $\vv$, it is enough to
%consider the case $i_1=1,i_2=2$. 
In any case, the coordinates $v_{i_1}$ and  $v_{i_2}$ of $\vv$ are both determined 
by $u$, and the sum of the traces of these coordinates, i.e. $(v_{i_1}+v'_{i_1})+(v_{i_2}+v'_{i_2})$, is the trace of $(c_{i_1}+c_{i_2})u$. Note that $c_{i_1}+c_{i_2}\neq 0$, as the $c_i$ are coordinates
of some $\vw\in\cS_T$.
Using Proposition \ref{prop:Counting-Tracesums}, we get that the number of $\vv$ (and thus also the number of $\vu$) is bounded by
%Then, again using Proposition \ref{prop:Counting-Tracesums}, $\vv=(c_1u,c_2u,u_3,\ldots,u_\rho)$ is
\begin{equation*}
\ll  T_{L,\rho-1}^{(c_{i_1}+c_{i_2},1,\ldots,1)}(X+1)\ll_{k,L}(\log X)^{\rho-1}.
\end{equation*}
 \qed

\section{Proof of Theorem \ref{thm:main}}\label{sec:proofthm}
Recall Definition \ref{def:equiv} of the equivalence relation $\sim$ on
$\Omega=\bigcup_{m\in\N}L^m$. If $M\subseteq\Omega$, we write
\begin{equation*}
  M/\sim :=\{[m]\where m\in M\}
\end{equation*}
for the set of equivalence classes that have a representative in $M$. As the
set
$(A\smallsetminus B)/\sim$ clearly contains $(A/\sim)\smallsetminus (B/\sim)$, we see that
\begin{equation*}
  \NLk(X)\geq \card\left((\cT_{k,\rho}(X)\smallsetminus
    \cE_{k}(X))/\sim\right)\geq \card\left(\cT_{k,\rho}(X)/\sim\right)-\card\left(\cE_k(X)/\sim\right).
\end{equation*}
On the other hand, Proposition \ref{prop:trace_sum_reduction} and Corollary
\ref{cor:trace_sum_reduction} show that
\begin{equation*}
  \NLk(X)\leq \sum_{\ell=0}^{\rho}\card\left(\cT_{k,\ell}(X)/\sim\right).
\end{equation*}
Proposition \ref{prop:Counting-Tracesums} implies that
\begin{equation*}
  \cT_{k,\ell}(X)\ll_{L,k,\ell}(\log X)^\ell\quad\text{ for all }\quad 0\leq \ell\leq \rho,
\end{equation*}
by fixing $\vxi\in\cF_{k-2\ell}$ and counting all $\vv$ with
$|S_{(\vv,\vv')}|\leq X+|S_{\vxi} |\ll_{k,L}X$. Together with Proposition
\ref{prop:non_unique_bound}, this shows that
\begin{equation*}
  \NLk(X)=\card(\cT_{k,\rho}(X)/\sim)+O_{k,L}\left((\log X)^{\rho-1}\right).
\end{equation*}
Hence, it remains to evaluate $\card(\cT_{k,\rho}(X)/\sim)$
asymptotically. Elements $\vu\in\cT_{k,\rho}(X)$ have one of the following
shapes, all with $\vv\in(\OO_L^\times)^{\rho}$ and $|v_i|\geq 1$ for $1\leq i\leq \rho$:
\begin{enumerate}[($S_1$)]
\item $\vu=(\vv,\vv')$,
\item $\vu=(\vv,\vv',1)$,
\item $\vu=(\vv,\vv',-1)$.
\end{enumerate}
If $k$ is even, then only shape ($S_1$) is possible. If $k$ is odd, then all three
shapes can appear. Using that
\begin{equation*}
  |S_{(\vv,\vv',\vxi)}|-|S_{\vxi}| \leq |S_{(\vv,\vv')}|\leq  |S_{(\vv,\vv',\vxi)}|+|S_{\vxi}|, 
\end{equation*}
we see that in each of the three cases we have at least
$T_{L,\rho}^{(1,\ldots,1)}(X-1)$ and at most $T_{L,\rho}^{(1,\ldots,1)}(X+1)$
elements $\vu\in \cT_{k,\rho}(X)$. Hence, by Proposition
\ref{prop:Counting-Tracesums}, for $i\in\{1,2,3\}$ we have
\begin{equation*}
  \card\{\vu\in\cT_{k,\rho}(X)\where \vu \text{ of shape }(S_i)\} =
  \left(\frac{2\log X}{\log\eta}\right)^{\rho}+O_{k,L}((\log X)^{\rho-1}).
\end{equation*}
An easy application of Proposition \ref{prop:Counting-Tracesums} shows that the
contribution to the above count of those $\vu$ with two or more identical
coordinates is $\ll_{k,L}(\log X)^{\rho-1}$. 
Hence, we can assume the coordinates of $\vv$ are pairwise distinct and of modulus $>1$.
This means that for each $\vu$ in $\cT_{k,\rho}(X)$
there are exactly $\rho!$ many equivalent elements in
$\cT_{k,\rho}(X)$ (arising from permuting the first $\rho$
coordinates). We conclude that
\begin{equation*}
  \card\left(\{\vu\in\cT_{k,\rho}(X)\where \vu \text{ of shape }(S_i)\}/\sim\right) =
  \frac{1}{\rho!}\left(\frac{2\log X}{\log\eta}\right)^{\rho}+O_{k,L}((\log X)^{\rho-1}).
\end{equation*}
This proves Theorem \ref{thm:main} for even $k$, as then only shape
$(S_1)$ is possible.

If $k$ is odd, it only remains to note that all elements $\vu\in \cT_{k,\rho}$
that belong to the same equivalence class must share the same shape. Indeed,
the shape of $\vu$ is specified by the number of coordinates of $\vu$ and the parity of the number of coordinates of
$\vu$ equal to $1$, which are clearly constant in equivalence classes. Hence,
\begin{align*}
  \card\left(\cT_{k,\rho}(X)/\sim\right) &=
                                           \sum_{i=1}^3\card\left(\{\vu\in\cT_{k,\rho}(X)\where
                                           \vu \text{ of shape
                                           }(S_i)\}/\sim\right)\\ &=\frac{3}{\rho!}\left(\frac{2\log X}{\log\eta}\right)^{\rho}+O_{k,L}((\log X)^{\rho-1}).\qed
\end{align*}

\section{Local solubility}\label{sec:locsol}
    Let $p$ be a prime.
    We need to study the solubility of \eqref{eq:B_points} with $u_i\in (\Z_p\otimes_\Z\OO_L)^\times$. We
    start by investigating solutions with $u_i\in\Z_p^\times$.

    If either $k,n\in\Z_p^\times$ or $k,n\notin\Z_p^\times$, then at
    least one of $n/k$ and $(n-1)/(k-1)$ is in
    $\Z_p^\times$, and thus
    \begin{align*}
      \frac{n}{k}+\cdots+\frac{n}{k} = n \quad\text{ or }\quad 1+\frac{n-1}{k-1}+\cdots+\frac{n-1}{k-1}=n
    \end{align*}
    is a solution in units of $\Z_p$.

    If $p$ is odd and $p\mid k$, then $p\nmid n-e$ for some
    $e\in\{1,2\}$ and we get the solution
    \begin{equation*}
      e+\frac{n-e}{k-1}+\cdots+\frac{n-e}{k-1}=n,
    \end{equation*}
    in units of $\Z_p$.

    If $p$ is odd and $p\nmid k$, $p\mid n$, then also $p\nmid k-e$ for some
    $e\in\{1,2\}$ and we get the solution
    \begin{equation*}
      1+\frac{n-1}{k-1}+\cdots+\frac{n-1}{k-1} = n\quad\text{ or }\quad 1+1+ \frac{n-2}{k-2}+\cdots+\frac{n-2}{k-2}=n
    \end{equation*}
    in units of $\Z_p$.

    In conclusion, there are solutions in units of $\Z_p$ whenever $p$ is odd
    or $p=2$ and $n\equiv k\bmod 2$. Via $u\mapsto u\otimes 1$, these also give
    solutions in units of $\Z_p\otimes_\Z\OO_L$.

    If $p=2$ is inert in $\OO_L$, then $\Z_2\otimes_\Z\OO_L=\OO_{\mathfrak{P}}$, the localisation of $\OO_L$
    at the unique prime ideal $\mathfrak{P}$ over $2$. One easily sees that
    every element of $\FF_4=\OO_L/\mathfrak{P}$ can be written as a sum of two units, and hence
    \eqref{eq:B_points} has solutions over $\FF_4$, in units
    $u_i\in\FF_4^\times$. By Hensel's lemma, these solutions lift to solutions
    over $\OO_{\mathfrak{P}}$, still in units
    $u_i\in\OO_{\mathfrak{P}}^\times$. Hence, we have shown that
    $\cX_n(\Z_p)\neq\emptyset$ whenever $p\neq 2$, when $p=2$ is inert in $L$,
    or when $p=2$ and $k\equiv n\bmod 2$.

    When $k\not\equiv n\bmod 2$ and $2$ is split or ramified in $L$, let
    $\mathfrak{P}$ be a prime ideal of $\OO_L$ lying above $2$. From the
    reductions $\Z_2\to\FF_2$ and $\OO_L\to \OO_L/\mathfrak{P} =\FF_2$ we get a
    ring homomorphism
    \begin{equation*}
      \Z_2\otimes_\Z\OO_L\to \FF_2.
    \end{equation*}
    As under our hypothesis on $k$ and $n$ there are clearly no solutions of
    \eqref{eq:B_points} in units of $\FF_2$, this shows that there can be no such solutions in
    units of $\Z_2\otimes_\Z\OO_L$, and hence $\cX_n(\Z_2)=\emptyset$.

\section*{Acknowledgement}

This work was initiated at the special semester ``Heights in Diophantine Geometry, Group Theory and Additive Combinatorics'' held in at the ESI 
(Erwin Schrödinger International Institute for Mathematical Physics), where all three authors used the friendly and inspiring atmosphere to discuss 
the problem of Jarden and Narkiewicz \cite[Problem C]{Jarden:2007}. 
%Many ideas for this paper have been established during our stay.
V.Z. was supported by the Austrian Science Fund (FWF) under the project P~24801-N26.

\bibliographystyle{abbrv}
\bibliography{Unitsumbib}
\end{document}